\documentclass{amsart}
\usepackage{latexsym}
\usepackage{amssymb}
\usepackage{amsmath}
\usepackage{enumerate}
\usepackage[dvips]{graphicx}
\usepackage[latin1]{inputenc}
\title{Pesin theory and equilibrium measures on the interval}
\author{Neil Dobbs}

\newcommand\omu{{\overline{\mu}}}

\newcommand\HD{\mathrm{HD}}

\newcommand\Supp{\mathrm{Supp}}

\newcommand\cM{\mathcal{M}}
\newcommand\cP{\mathcal{P}}


\newcommand\arr{\mathbb{R}}

\newcommand\R{\mathbb{R}}
\newcommand\Z{\mathbb{Z}}

\newtheorem{thm}{Theorem}
\newtheorem{dfn}[thm]{Definition}
\newtheorem{lem}[thm]{Lemma}
\newtheorem{prop}[thm]{Proposition}

\begin{document}
\date{\today}
\maketitle
\begin{abstract}
    We use Pesin theory to study possible equilibrium measures for piecewise monotone maps of the interval. The maps  may have unbounded derivative. 
\end{abstract}
\section{Introduction}

Our goal is to study possible equilibrium measures for rather general piecewise monotone maps of the interval, possibly with discontinuities. In \cite{Me:Cusp}, we developed Pesin theory for interval transformations with unbounded derivative and studied properties of measures absolutely continuous with respect to Lebesgue measure. Here we use the theory to investigate measures absolutely continuous with respect to some conformal measures, mirroring work we did in \cite{Me:Trans}. 

	In general, equilibrium measures are probability measures which encode dynamical information for large sets of points (those seen by the measure) with some property and they maximise (or minimise, depending on the definition) the \emph{free energy} with respect to the corresponding potentials. Measures of maximal entropy and absolutely continuous invariant measures are important examples of equilibrium measures.  Existence and uniqueness of equilibrium measures have long been of interest, see \cite{Hofbauer:IntrinsicErgodicityI, Hofbauer:IntrinsicErgodicityII, DKU:eq, Rychlik:BV, Walters:VP, Walters:beta, HK:Eq, BT:EquilibriumInterval, BruinTodd:Eq2, BRLSVS, LuzzattoTucker, LuzzattoViana:Ast00, DHL06}, for example. We show in this paper that equilibrium measures must often be of a certain form. In some cases this allows to show uniqueness (\cite{JuanLi:Hyp}). The main result is Theorem~\ref{thm:big}.

    The techniques and proofs rely heavily on those of \cite{Me:Cusp, Me:Trans}. Each of those articles extended work on Pesin theory of F.\ Ledrappier \cite{Ledrappier:AbsCnsInterval, Ledrappier:AbsCnsComplex}. Lemma~\ref{lem:9} is new, we provide some additional detail in Lemma~\ref{lem:qp}, and Proposition~\ref{lem:RuelleB} is different, but for the most part we refer to \cite{Me:Cusp, Me:Trans} for the proofs, so this paper remains brief.  

	We consider maps $f$ defined on a finite union of open intervals. This does not preclude gaps where $f$ is undefined and is thus quite general. Our results also apply to smooth multimodal maps, for example, as interesting measures tend not to live on the critical orbits, and removing critical points from the domain will give a cusp map. We allow unbounded potentials (from logarithm of the derivative) and also unbounded derivatives, providing an alternate approach to that of \cite{DKU:eq}. Using the natural extension and looking at measures with positive Lyapunov exponent lets us control distortion along almost all backward branches. 

    \begin{dfn}
    Let $I$ be a non-degenerate compact interval and let $I_1, \ldots, I_p$ be a finite collection of pairwise disjoint open subintervals of $I$. Following \cite{Me:Cusp}, a map $f : \bigcup_{j=1}^p I_j \to I$ is a \emph{weak piecewise monotone cusp map} (with constants $C, \epsilon >0$) if on each $I_j$,
    \begin{itemize}
        \item
        $f: I_j \to f(I_j)$ is a diffeomorphism;
        \item for all $x,x'$ such that $|Df_j(x)|, |Df_j(x')| \leq 2$,
            $$
                |Df_j(x) - Df_j(x')| \leq C |x-x'|^\epsilon;
            $$
        \item for all $x,x'$ such that $|Df_j(x)|, |Df_j(x')| \geq 2^{-1}$,
        $$
        \left|\frac{1}{Df_j(x)} - \frac{1}{Df_j(x')}\right| \leq C |x-x'|^\epsilon.
        $$
    \end{itemize}
    We say $f$ is a \emph{piecewise monotone cusp map} if, in addition, 
       on each $I_j$ and for each $x' \in \partial I_j$, 
       $
        \lim_{x \to x'} Df(x)$ equals 0 or $\pm \infty$. 
        \end{dfn}
        Thus the difference between being \emph{weak} or not is the derivative condition at the boundary of the domain of definition. Note that we assume that $f$ is defined on a finite collection of intervals. In \cite{Me:Cusp}, we started off allowing $f$ to be defined on a countable collection of intervals, but for some results subsequently assume that $f$ has at most a finite number of discontinuities (on top of being piecewise monotone). In this paper, the additional hypothesis appears in the definition.


We shall restrict our attention to Borel-measurable sets and Borel measures, without further mention. 

Let $\phi : I \to \arr$ be a H\"older continuous function and let $t \in \arr$. 
Consider the relation
\begin{equation}\label{eqn:conformality}
m(f(A)) = \int_A e^\phi |Df|^t dm.
\end{equation}
\begin{dfn} \label{dfn:phitconforme}
    Let $X', X \subset \arr$ and let $f: X' \to X$ be diffeomorphic on each connected component of $X'$. We say a
 measure $m$ is \emph{$(\phi,t)$-conformal} for $f$ if (\ref{eqn:conformality}) holds for
 every   set $A$ on which $f$ is injective. 
\end{dfn}
We emphasise that we do not require the conformal measure to be finite, though we will require it to be finite on some open subinterval for our main result. There are indications this could be useful in applications when it is sometimes hard to construct a well-behaved finite conformal measure. 
%

    We denote by $h_\mu$ the entropy of an invariant probability measure $\mu$. Given a \emph{potential} function $\psi : X \to \R$, we can define the \emph{pressure} 
    $$
    P(\psi, f) := \sup \left\lbrace h_\mu + \int_X \psi d\mu \right\rbrace
    $$
    where the supremum is taken over all invariant probability measures $\mu$. Any measure $\mu$ realising this supremum is called an \emph{equilibrium state} or \emph{equilibrium measure} (for the potential $\psi$). 
    An equilibrium state $\mu$ for $\psi$ is also an equilibrium state for $\psi +C$ for every constant $C$, and in particular for the potential $\psi_0 := \psi - P(\psi,f)$. Clearly $P(\psi_0, f) = 0$, so
    $$
    h_\mu = \int -\psi_0 d\mu,
    $$
    for the equilibrium state $\mu$. 
    Conversely, if $P(\psi_0, f) = 0$ and $h_\mu = \int -\psi_0 d\mu$, then $\mu$ is an equilibrium state. 
     Often one can show that there is a  $(\phi, t)$-conformal measure when $P(-\phi -t\log|Df|, f) = 0$. If that is the case, then a measure satisfying the equivalent conditions in the main theorem is an equilibrium state. 
     \begin{dfn}
        Let $f$ be a cusp map with a $(\phi,t)$-conformal measure $m$. Let $U$ be an open interval.  We call $G$ an \emph{expanding induced Markov map for $(f,m)$} if there is a countable collection of pairwise disjoint intervals $U_i \subset U$ such that:
        \begin{itemize}
            \item
                $m(U)>0$; 
            \item
                $m(U \setminus \bigcup_i U_i) = 0$;
            \item
                for each $i$, there exists $n_i$ such that $f^{n_i}_{|U_i} = G_{|U_i}  : U_i \to U$ is a diffeomorphism;
            \item
                there exist $C_0, \delta >0$ such that, for each $i$ and each $j \leq n_i$, 
                $$
                    |Df^j(x)| > C_0 e^{\delta j}$$
                for all $x \in f^{n_i - j}(U_i)$;
            \item 
                there exists $C_1 >0$ such that on each $U_i$, $G$ has distortion bounded by $C_1$ and  $|DG|>2$. 
        \end{itemize}
            If additionally $\sum_i n_i m(U_i) < \infty$, then we say that $G$ has \emph{integrable return time}. 
    \end{dfn}
    \begin{lem} \label{lem:normalise}
        Let $G$ be as per the definition and suppose $G$ has integrable return time.
            Let $\nu$ (cf. Lemma~\ref{lem:Folk}) be the unique absolutely continuous invariant probability measure for $G$. 
Set
    $$
    \mu' := \sum_i \sum_{j=0}^{n_i -1} f_*^j \nu.
    $$
             Then $\mu := \mu' /\mu'(I)$ is an ergodic, absolutely continuous $f$-invariant probability measure: 
    \end{lem}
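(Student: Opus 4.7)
\noindent \emph{Proof plan.} The plan is to verify the four required properties of $\mu$ in sequence: (a) $\mu'$ is a finite measure, (b) $\mu'$ is $f$-invariant, (c) $\mu'$ is absolutely continuous with respect to $m$, and (d) the normalised measure $\mu$ is ergodic. Throughout, I will interpret $f^j_*\nu$ summand-by-summand as $f^j_*(\nu|_{U_i})$, which is the natural reading of the tower construction.

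\textbf{Finiteness.} Compute $\mu'(I) = \sum_i \sum_{j=0}^{n_i-1} \nu(U_i) = \sum_i n_i \nu(U_i)$. By the Folklore theorem (Lemma~\ref{lem:Folk}), the density $d\nu/dm$ on $U$ is bounded above by some constant $K$, so $\nu(U_i) \leq K \cdot m(U_i)$. The integrability hypothesis $\sum_i n_i m(U_i) < \infty$ then yields $\mu'(I) < \infty$.

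\textbf{Invariance.} Compute
\[
f_*\mu' \;=\; \sum_i \sum_{j=0}^{n_i-1} f^{j+1}_*(\nu|_{U_i}) \;=\; \sum_i \sum_{j=1}^{n_i-1} f^{j}_*(\nu|_{U_i}) \;+\; \sum_i f^{n_i}_*(\nu|_{U_i}).
\]
Since $f^{n_i}|_{U_i} = G|_{U_i}$, the last sum equals $G_*\nu = \nu = \sum_i \nu|_{U_i}$ by $G$-invariance of $\nu$. This recovers precisely the $j=0$ terms of $\mu'$, so $f_*\mu' = \mu'$.

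\textbf{Absolute continuity.} Each summand $\nu|_{U_i}$ is absolutely continuous with respect to $m|_{U_i}$. Since $f^j$ restricted to $U_i$ is a diffeomorphism (for $j \leq n_i$) and the conformality relation (\ref{eqn:conformality}) implies that $f_*$ sends absolutely continuous measures to absolutely continuous measures (the Radon--Nikodym derivative picks up a factor $e^{-\phi}|Df|^{-t}$ on the appropriate inverse branch), each pushforward $f^j_*(\nu|_{U_i})$ is absolutely continuous with respect to $m$. Hence so is the sum $\mu'$, and so is $\mu$.

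\textbf{Ergodicity.} Suppose $A \subset I$ satisfies $f^{-1}(A) = A$ modulo $\mu$, and assume $\mu(A) > 0$. Then for some $i$ and some $0 \leq j \leq n_i-1$ we have $f^j_*(\nu|_{U_i})(A) = \nu(U_i \cap f^{-j}(A)) > 0$. By absolute continuity, $f$-invariance of $A$ modulo $\mu$ lifts to $f$-invariance modulo $m$, hence modulo $\nu$, so $f^{-j}(A) = A$ modulo $\nu$. Therefore $\nu(U_i \cap A) > 0$, and in particular $\nu(U \cap A) > 0$. The set $U \cap A$ is $G$-invariant (since $G|_{U_i} = f^{n_i}|_{U_i}$ and $A$ is $f$-invariant), so by ergodicity of $\nu$ under $G$ (Lemma~\ref{lem:Folk}), $\nu(U \setminus A) = 0$. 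Running the same argument with $A^c$ in place of $A$, we conclude $\mu(A^c) = 0$, proving ergodicity.

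The only step that requires real input is the finiteness step, which depends on the boundedness of the density $d\nu/dm$ supplied by the Folklore theorem; the remaining steps are standard tower-construction manipulations.
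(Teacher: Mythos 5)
The paper itself dismisses this lemma with a one-word proof (``Evident''), so there is no official argument to compare against; your write-up supplies the standard tower/Kac-type verification and it is essentially correct. The finiteness, invariance, and absolute continuity steps are all clean: in particular you correctly read the formula as $\mu' = \sum_i\sum_{j=0}^{n_i-1} f^j_*(\nu|_{U_i})$, correctly use the upper density bound from Lemma~\ref{lem:Folk} to get $\nu(U_i)\leq K\,m(U_i)$, and correctly telescope the $j$-sum against $G_*\nu=\nu$.

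The one place your reasoning is stated backwards is in the ergodicity step, at the sentence ``By absolute continuity, $f$-invariance of $A$ modulo $\mu$ lifts to $f$-invariance modulo $m$.'' Absolute continuity $\mu\ll m$ transfers $m$-nullity to $\mu$-nullity, not the reverse, so $\mu(f^{-1}A\bigtriangleup A)=0$ does not by itself give $m(f^{-1}A\bigtriangleup A)=0$ (nor $\nu$-invariance). The gap is easily closed by a standard preliminary: replace $A$ by a strictly $f$-invariant set $\tilde A$ with $\mu(A\bigtriangleup\tilde A)=0$ (e.g.\ $\tilde A=\{x: f^n(x)\in A \text{ for infinitely many } n\}$), which exists for any $f$-invariant $\mu$. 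With $\tilde A$ strictly invariant, $f^{-j}(\tilde A)=\tilde A$ exactly, so $\nu(U_i\cap f^{-j}(\tilde A))=\nu(U_i\cap\tilde A)$ for every $i,j$, and $U\cap\tilde A$ is $G$-invariant up to the $m$-null (hence $\nu$-null) set $U\setminus\bigcup_i U_i$; the rest of your argument then goes through verbatim. With that repair the proof is complete.
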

        \begin{proof} Evident. \end{proof}
        \begin{dfn}
            The measure $\mu$ of Lemma~\ref{lem:normalise} is said to be \emph{generated by} $G$. 
    \end{dfn}

\begin{thm} \label{thm:big}
    Let $f : \bigcup_{j=1}^p I_j \to I$ be a weak piecewise monotone cusp map with a $(\phi, t)$-conformal measure $m$. 
    Let $\mu$ be an ergodic invariant probability measure with positive entropy and positive finite Lyapunov exponent $\chi_\mu$. 

	Suppose that $\Supp(\mu) \subset \Supp(m)$ and that there is an open interval $W$ with $\mu(W)>0$ and $m(W) < \infty$. 
    Then \begin{enumerate}[(a)]
    \item
        $h_\mu \leq t\chi_\mu + \int \phi d\mu$;
     \item if $P(-\phi -t\log|Df|, f) > 0$ then there is no equilibrium state with positive entropy and positive finite Lyapunov exponent. 
    \end{enumerate}
    The following conditions are equivalent. 
    \begin{enumerate}[(i)]
        \item \label{enum:ace1}$\mu \ll m$;
        \item \label{enum:ace4}$\mathrm{HD}(\mu) =  t + \int \phi d\mu/\chi_\mu$;
        \item \label{enum:ace2}$h_\mu = t\chi_\mu + \int \phi d\mu$;
        \item \label{enum:ace3}the density of $\mu$ with respect to $m$ is bounded from below by a positive constant on an open interval of positive measure;
        \item \label{enum:ace5} there is an expanding induced Markov map for $(f,m)$ with integrable return time which generates $\mu$.
    \end{enumerate}
    Should the equivalent conditions hold then 
     $P(-\phi -t\log|Df|, f) \geq 0$, with equality if and only if $\mu$ is an equilibrium state. 

    If one considers piecewise monotone cusp maps rather than \emph{weak} piecewise monotone cusp maps, then the references to positive entropy can be dropped. 
\end{thm}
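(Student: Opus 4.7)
The plan is to adapt the Pesin-theoretic framework of \cite{Me:Cusp, Me:Trans} from Lebesgue measure to a general $(\phi,t)$-conformal reference measure $m$. Part (a) is a Ruelle-type inequality, part (b) is an immediate corollary, and the bulk of the proof is closing the ring (i) $\Rightarrow$ (iii) $\Rightarrow$ (iv) $\Rightarrow$ (v) $\Rightarrow$ (i); meanwhile (ii) $\Leftrightarrow$ (iii) reduces to a one-dimensional dimension formula. For (a), I would take a finite partition refining $\{I_1,\ldots,I_p\}$ fine enough that the H\"older estimates in the definition of a weak piecewise monotone cusp map control distortion on the region $1/2 \leq |Df|\leq 2$, and treat the regions $|Df|<1/2$ and $|Df|>2$ by passing to $f$ or $f^{-1}$. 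The conformality relation (\ref{eqn:conformality}) identifies the $m$-Jacobian of $f$ as $e^\phi|Df|^t$; combined with almost-sure exponential backward contraction on the natural extension of $(f,\mu)$ and bounded local degree, this yields $h_\mu \leq \int(\phi + t\log|Df|)\,d\mu = t\chi_\mu + \int\phi\,d\mu$. Part (b) follows at once: if $\mu$ were an equilibrium state satisfying the hypotheses, then $P = h_\mu - t\chi_\mu - \int\phi\,d\mu \leq 0$, contradicting $P>0$.

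For the equivalences, (i) $\Rightarrow$ (iii) is Rokhlin's formula: writing $\mu = \rho\,m$, the $\mu$-Jacobian of $f$ is $e^\phi|Df|^t\cdot(\rho\circ f)/\rho$, whose logarithm integrates against $\mu$ to $t\chi_\mu + \int\phi\,d\mu$ since the coboundary $\log(\rho\circ f) - \log\rho$ has zero mean under $f$-invariance (the integrability technicalities being handled as in \cite{Me:Cusp}). The key implication (iii) $\Rightarrow$ (iv) transplants the Pesin argument of \cite{Me:Cusp, Me:Trans}: on the natural extension, generic points carry unstable intervals on which all backward iterates of $f$ have bounded distortion, the conditional measures along these admit densities with respect to $m$ bounded below by bounded distortion, and equality in the Ruelle inequality collapses the conditional disintegration so that $\mu$ itself has a density with respect to $m$ bounded below on some open interval of positive $\mu$-measure. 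Lemma~\ref{lem:9}, the additional detail in Lemma~\ref{lem:qp}, and the modified Proposition~\ref{lem:RuelleB} all enter here to accommodate a possibly infinite conformal measure and the boundary cusp behaviour. For (iv) $\Rightarrow$ (v), I would induce on an open subinterval $U\subset W$ where $d\mu/dm$ is bounded below; intersecting with a sufficiently deep Pesin hyperbolic block of large $\mu$-measure and exploiting bounded distortion of backward branches produces a countable family of inverse branches $U_i$ of $f^{n_i}$ mapping diffeomorphically onto $U$ with uniform expansion $|DG|>2$ and uniform distortion, while the integrability $\sum_i n_i m(U_i)<\infty$ follows from Kac's formula since $\mu(U)>0$. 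Finally (v) $\Rightarrow$ (i) is essentially tautological: the ACIM $\nu$ for $G$ satisfies $\nu\ll m|_U$ by Lemma~\ref{lem:Folk}, each pushforward $f_*^j\nu$ inherits $m$-absolute continuity via (\ref{eqn:conformality}) on each injective piece, and the normalised sum is $\mu$, so $\mu\ll m$.

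The equivalence (ii) $\Leftrightarrow$ (iii) is the one-dimensional Ledrappier formula $\HD(\mu) = h_\mu/\chi_\mu$ divided through by $\chi_\mu$. The pressure statement follows from (iii) rewritten as $h_\mu + \int(-\phi - t\log|Df|)\,d\mu = 0$: inserting $\mu$ into the variational principle gives $P\geq 0$, with equality iff the supremum is attained at $\mu$, i.e.\ iff $\mu$ is an equilibrium state. For the final remark, in the strict (non-weak) cusp setting the boundary behaviour $Df \to 0$ or $\pm\infty$ on $\partial I_j$ precludes the coexistence of positive finite Lyapunov exponent with zero entropy, by arguments already present in \cite{Me:Cusp}, so ``positive entropy'' becomes redundant in the hypotheses. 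I expect (iii) $\Rightarrow$ (iv) to be the principal technical obstacle: it demands reworking the density and distortion estimates of \cite{Me:Cusp, Me:Trans} from Lebesgue to a possibly infinite $(\phi,t)$-conformal measure, and Lemma~\ref{lem:9} together with the modified Proposition~\ref{lem:RuelleB} are presumably the main vehicles for this.
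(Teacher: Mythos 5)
Your high-level strategy -- natural extension, Pesin unstable intervals, a cycle of implications, and the dimension formula for (ii) $\Leftrightarrow$ (iii) -- does match the paper, but there is a genuine gap in your proposed proof of (i) $\Rightarrow$ (iii), and the proof of part (a) is not the one the paper uses. For (i) $\Rightarrow$ (iii) you invoke Rokhlin's Jacobian formula, write $J_\mu f = e^\phi|Df|^t\,(\rho\circ f)/\rho$, and dismiss $\int\bigl(\log(\rho\circ f)-\log\rho\bigr)\,d\mu=0$ as ``integrability technicalities handled as in \cite{Me:Cusp}''. But this step is \emph{not} handled there: the paper's introduction explicitly flags Lemma~\ref{lem:9} as new, precisely because the Jacobian/coboundary route is delicate here -- $\log\rho$ need not be $\mu$-integrable, and since $m$ is only assumed finite on $W$ (and possibly infinite elsewhere), the global density $\rho=d\mu/dm$ is an awkward object. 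The paper's Lemma~\ref{lem:9} instead compares $-\tfrac1n\log m(\cP_n(x))$ (controlled by conformality and the uniform bounds of Lemma~\ref{lem:snpsi} along unstable branches) with $-\tfrac1n\log\mu(\cP_n(x))$ (Shannon--McMillan--Breiman), and then derives a contradiction by covering the full-measure set $X\cap W$ inside the interval $W$ where $m$ is finite. You would need to supply an argument of that kind; the cited source does not do it for you.

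Two smaller points. For (a), the paper does not use a finite partition refining $\{I_1,\ldots,I_p\}$ with direct distortion estimates -- that approach collides with the unbounded derivative near $\partial I_j$. Instead it compares the Rohlin disintegration $p(y,\cdot)$ with a candidate $q(y,\cdot)$ built from $\Phi$ and $m$, proves $q_n/p_n = d\overline{\nu}_n/d\overline\mu_n$ (Lemma~\ref{lem:qp}), and gets (a) from concavity of $\log$ together with $\overline{\nu}_n(Y)\le 1$; the equality case then gives $q=p$ and hence (iii) $\Rightarrow$ (i) directly, which is why the paper routes (iii) $\Rightarrow$ (iv) and (iii) $\Rightarrow$ (v) rather than your (iv) $\Rightarrow$ (v). Finally, your justification for the closing remark is incorrect: positive finite Lyapunov exponent with zero entropy is \emph{not} precluded for cusp maps (a Dirac mass at a hyperbolic fixed point is an example). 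The actual reason the entropy hypothesis can be dropped is that the unstable-manifold theory of \cite{Me:Cusp} (Theorem~\ref{thm:NeilUnstableManif} and its consequences) holds for non-weak cusp maps assuming only positive Lyapunov exponent, whereas for weak cusp maps it additionally requires positive entropy.
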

Regarding proofs, (a) is shown in Proposition~\ref{lem:RuelleB}, and (a) implies (b). 
That (\ref{enum:ace1}) implies (\ref{enum:ace2}) is Lemma~\ref{lem:9}. 
That (\ref{enum:ace4}) if and only if (\ref{enum:ace2}) follows from Proposition~\ref{prop:dvl}. Lemmas~\ref{lem:den} and~\ref{lem:Mar} 
show (\ref{enum:ace2}) implies (\ref{enum:ace3}) and (\ref{enum:ace5}), while by definition, (\ref{enum:ace5}) implies (\ref{enum:ace1}). 

	Under some sort of transitivity assumptions, perhaps on the support of $m$, (\ref{enum:ace3}) will imply that $\mu$ is the \emph{unique} absolutely continuous (with respect to $m$) invariant probability measure. 
        
        While $f$ does not have critical points, conformal measures may be supported on points which get mapped outside the domain of definition of $f$. For example, given a unimodal map with a conformal measure supported on the critical point and its backward orbit, the measure will still be conformal for the corresponding cusp map (with the critical point removed from the domain of definition). 

	Supposing that $\Supp(m) \supset \Supp(\mu)$ is reasonable. For transitive maps, conformal measures usually have support equal to the entire space. For non-transitive maps, they will often still have support some large completely invariant set, for example the complement of a basin of attraction or some such. 
	
	Another way of looking at the first statement of the theorem is that conformal measures can only exist for certain combinations of $\phi$ and $t$, while existence of a conformal measure bounds the free energies of invariant probability measures.

\section{Proof}

We define the natural extension as per \cite{Ledrappier:AbsCnsComplex}. Let 
$$
Y := \{ y = (y_0y_1y_2\ldots) : f(y_{i+1}) = y_i \in I\}.
$$
Define $F^{-1} : Y \to Y$ by $F^{-1}((y_0y_1\ldots)) := (y_1y_2\ldots)$. Then $F^{-1}$ is invertible with inverse $F : F^{-1}(Y) \to Y$. The projection $\Pi : Y \to I$ is defined by $\Pi :y = (y_0y_1\ldots)  \mapsto y_0$. 
Then $\Pi \circ F = f \circ \Pi$. Given any measure $\mu \in \cM(f)$ there exists a unique $F$-invariant measure $\omu$ such that $\Pi_*\omu = \mu$. Moreover $\omu \in \cM(F)$ and $\omu \in \cM(F^{-1})$ (see \cite{Rohlin:Exact}). 

We call the triplet $(Y,F,\omu)$ the \emph{natural extension} of $(f,\mu)$ (it is also called the Rohlin extension or the canonical extension). 

Let us remark that invariant probability measures give no mass to the sets of points $x$ for which there is an $n > 0$ such that $f^n(x)$ is not defined, nor do they give mass to the set of $x$ for which there exists an $n>0$ and \emph{no solution} $x'$ to $f^n(x') = x$. Thus, $F^n(y)$ is defined for all $n \in \Z$ for $\omu$ almost every $y \in Y$.

We have the following unstable manifold theorem --- around almost every point in the natural extension one can pull back an interval along the corresponding branch as far as one wants with bounded distortion and exponential shrinking (and without meeting boundary points or discontinuities). 

\begin{thm}[Theorem 16 of \cite{Me:Cusp}]     \label{thm:NeilUnstableManif}
There exists a measurable function $\alpha$ on $Y$, $0 < \alpha < 1/2$ almost everywhere, such that for $\overline{\mu}$ almost every $y \in Y$ 
there exists a  set $V_y \subset Y$ with the following properties:
\begin{itemize}
\item $y \in V_y$ and $\Pi V_y =  B(\Pi y, \alpha(y))$;
\item for each $n >0$, $f^n : \Pi F^{-n}V_y \to \Pi V_y$ is a diffeomorphism (in particular it is onto);
\item for all $y' \in V_y$
$$
  \sum_{i=1}^\infty \left| \log|Df(\Pi F^{-i}y')| - \log |Df(\Pi F^{-i}y)|\right|  
< \log 2;
$$
\item for each $\eta >0$ there exists a measurable function $\rho$ on $Y$, $0< \rho(y) < \infty$ almost everywhere, such that 
$$
\rho(y)^{-1}e^{n(\chi - \eta)} < |Df^n(\Pi F^{-n}y)| < \rho(y)e^{n(\chi + \eta)}.
$$
\end{itemize}
In particular, $|\Pi F^{-n}V_y| \leq 2\rho(y)e^{-n(\chi-\eta)}$.
\end{thm}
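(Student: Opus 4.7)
The plan is to follow the classical Pesin construction of unstable manifolds, replacing the usual $C^{1+\alpha}$ derivative bound with the two cusp-type Hölder conditions, one governing the regime $|Df|\leq 2$ and the other the regime $|Df|\geq 1/2$. First I would establish almost-sure pointwise control: since $\chi_\mu>0$ is finite, the Oseledec/Birkhoff theorem applied to $(Y,F^{-1},\omu)$ yields $\frac{1}{n}\log|Df^n(\Pi F^{-n}y)| \to \chi$ for $\omu$-almost every $y$, and for any $\eta>0$ this gives a measurable function $\rho_0(y)$ with
$$
\rho_0(y)^{-1}e^{n(\chi-\eta/2)}\leq |Df^n(\Pi F^{-n}y)|\leq \rho_0(y)e^{n(\chi+\eta/2)}.
$$
In parallel, Borel--Cantelli applied to the integrable observables $\log|Df|$, $\log\operatorname{dist}(\cdot,\partial)$, etc.\ provides sub-exponentially small control on how close $\Pi F^{-n}y$ comes to a discontinuity or to a point where $|Df|$ is extreme.

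Next I would set up the distortion estimate along one backward branch. Assuming inductively that the pullback intervals through $y$ shrink exponentially at rate $e^{-(\chi-\eta)}$, the log-distortion is controlled by
$$
\sum_{i=1}^n \bigl|\log|Df(\Pi F^{-i}y')|-\log|Df(\Pi F^{-i}y)|\bigr|,
$$
and for each $i$ I would invoke whichever of the two Hölder conditions applies at $\Pi F^{-i}y$: the first gives $|\log Df(x)-\log Df(x')|\leq C|Df|^{-1}|x-x'|^\epsilon$, and the second, applied via the identity $\log|Df|=-\log|1/Df|$, gives an analogous bound with factor $|Df|$. In both cases the resulting term is bounded by a constant times $|Df(\Pi F^{-i}y)|^{\pm 1}|\Pi F^{-i}y'-\Pi F^{-i}y|^\epsilon$, and the exponential shrinking of the pullback intervals makes the series summable provided the prefactors $|Df|^{\pm 1}$ do not grow too fast.

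With distortion under control, I would define $\alpha(y)$ as the largest radius for which the induction closes: namely, $B(\Pi y,\alpha(y))$ can be pulled back through all $n\geq 0$ without encountering a discontinuity, the pullbacks shrink at rate $e^{-(\chi-\eta)}$, and the accumulated distortion stays below $\log 2$. Measurability of $\alpha$ and of the refined $\rho$ follows from writing them as countable infima of measurable quantities, and positivity almost everywhere follows from the sub-exponential smallness of all obstructions. Finally, the unstable manifold $V_y\subset Y$ is the set of $y'$ with $\Pi y'\in B(\Pi y,\alpha(y))$ whose backward itinerary under $F^{-1}$ shadows that of $y$ through the constructed pullbacks.

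The main obstacle is the coupling between the derivative prefactors $|Df(\Pi F^{-i}y)|^{\pm 1}$ appearing in the distortion estimate and the fact that the backward orbit may repeatedly visit cusp regions where these prefactors blow up. The way out is that $\chi$ is finite and positive: by Birkhoff applied to $\log|Df|$, the fraction of iterates $i\leq n$ on which $|Df(\Pi F^{-i}y)|^{\pm 1}$ exceeds $e^{\eta i}$ is sub-exponentially small almost surely, so the exceptional contributions to the distortion sum are absorbed once $\alpha(y)$ is chosen small enough depending on $y$. This is exactly the point where the finiteness of $\chi_\mu$, rather than merely its positivity, is essential.
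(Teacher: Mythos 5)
The paper does not prove this result: it is imported verbatim as Theorem~16 of \cite{Me:Cusp}, so there is no in-text proof here to compare against. That said, your sketch reproduces the architecture one expects from the Ledrappier--Pesin construction adapted to cusp maps: ergodic/Oseledecs control of $|Df^n(\Pi F^{-n}y)|$, exponential shrinking of pullbacks, distortion control via the two overlapping cusp-H\"older conditions, and a measurable $\alpha(y)$ defined as the largest radius for which the induction closes.

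Two places where you are waving rather than proving. First, you apply Borel--Cantelli to $\log\operatorname{dist}(\cdot,\partial)$ without addressing whether this observable is $\mu$-integrable; for (non-weak) cusp maps this can be tied to the integrability of $\log|Df|$, that is, to $\chi_\mu<\infty$, via the prescribed degeneration of $Df$ at the boundary, but for \emph{weak} cusp maps $Df$ need not degenerate there, so a separate argument is required to guarantee the backward orbit stays sub-exponentially far from the finitely many boundary points (or one must work around this by shrinking $\alpha(y)$ on the bad set). Second, your reduction to the bound $\bigl|\log|Df(x)|-\log|Df(x')|\bigr|\le C\,|Df|^{\pm1}\,|x-x'|^{\epsilon}$ requires that both $x$ and $x'$ lie in the same derivative regime ($|Df|\le 2$ or $|Df|\ge 1/2$); once pullbacks are short enough this follows from continuity and the overlap of the two regimes on $[1/2,2]$, but that is itself part of what the induction must deliver, so it should be stated as a step of the induction rather than taken for granted. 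Neither objection is fatal, and the hinge you identify --- that finiteness, not merely positivity, of $\chi_\mu$ controls the multiplicative prefactors $|Df|^{\pm1}$ along the backward orbit --- is exactly the right one.
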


With a non-trivial amount of work, one can then prove:

\begin{lem}[Lemma 25 of \cite{Me:Cusp}] \label{lem:exactness}
Given any interval $V$ of positive $\mu$-measure, there is a $j > 0$ such that $\mu\left(\bigcup_{k = 0}^j f^k(V)\right) =1$.
\end{lem}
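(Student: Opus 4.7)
The approach is to lift the claim to the natural extension, combine Theorem~\ref{thm:NeilUnstableManif} with ergodicity of $\omu$ to place an open neighbourhood around $\mu$-a.e.\ point inside some $f^k(V)$, and then upgrade the resulting open cover of a full $\mu$-measure set into a finite one.

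First, I would work on $(Y, F, \omu)$. By Theorem~\ref{thm:NeilUnstableManif}, for $\omu$-a.e.\ $y$ the open interval $\Pi V_y = B(\Pi y, \alpha(y))$ has nested diffeomorphic pull-backs $\Pi F^{-n} V_y$ of diameter decaying exponentially in $n$. Since $\mu$ is $f$-ergodic, $\omu$ is $F^{-1}$-ergodic, and $\omu(\Pi^{-1}(V)) = \mu(V) > 0$; applying Birkhoff for $F^{-1}$ to $\mathbf{1}_{\Pi^{-1}(V)}$ yields, for $\omu$-a.e.\ $y$, infinitely many $n \geq 0$ with $\Pi F^{-n} y \in V$.

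Combining the two: for such a $y$, once $n$ is large enough that $\diam \Pi F^{-n} V_y$ is smaller than the distance from $\Pi F^{-n} y$ to $\partial V$, we have $\Pi F^{-n} V_y \subset V$; applying the diffeomorphism $f^n$ gives $\Pi V_y \subset f^n(V)$. Hence $\mu$-a.e.\ $x$ has an open neighbourhood contained in $B_\infty := \bigcup_{k \geq 0} f^k(V)$. Since $f$ is locally a diffeomorphism on its domain, each $f^k(V)$ is open, so $B_\infty$ is open with $\mu(B_\infty) = 1$.

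The main obstacle is the last step: passing from $\mu(B_j) \nearrow 1$ to equality at some finite $j$, where $B_j := \bigcup_{k \leq j} f^k(V)$. A Luzin/Egorov argument on $(Y, \omu)$ produces, for given $\epsilon > 0$, a compact set of $\omu$-measure exceeding $1 - \epsilon$ on which $\alpha(y) \geq \alpha_0$ and the first $n$ realising $\Pi F^{-n} V_y \subset V$ is bounded by some $N_\epsilon$, yielding $\mu(B_{N_\epsilon}) > 1 - \epsilon$. To close the remaining gap I would invoke the structural consequence of positive entropy for ergodic invariant measures on the interval: $\Supp(\mu)$ is a finite cyclic union of intervals $J_0, \ldots, J_{d-1}$ on which $f^d$ is topologically exact. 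The non-empty open subset $V \cap J_{i_0}$ then satisfies $f^{dM}(V \cap J_{i_0}) \supset J_{i_0}$ for some finite $M$, so $B_{dM + d - 1} \supset \Supp(\mu)$, giving $\mu(B_{dM + d - 1}) = 1$ as required.
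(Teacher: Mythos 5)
Your first part is sound: lifting to the natural extension, using Theorem~\ref{thm:NeilUnstableManif} to pull an interval back along $\omu$-a.e.\ branch with exponentially small image, and applying Birkhoff for $F^{-1}$ to find $n$ with $\Pi F^{-n}y \in V$ and $\Pi F^{-n}V_y \subset V$, you correctly conclude that $\mu$-a.e.\ point has an open neighbourhood inside $B_\infty := \bigcup_{k\ge 0} f^k(V)$, hence $\mu(B_\infty)=1$. (A small cleanup: first replace $V$ by an interval $V'$ with $\overline{V'}\subset V$ and $\mu(V')>0$, so the shrinking pullbacks only need to beat the fixed positive gap $\dist(V',\partial V)$ rather than the distance to $\partial V$ from a point that may be arbitrarily close to the boundary.)

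The closing step, however, relies on a false fact. The assertion that positive entropy forces $\Supp(\mu)$ to be a finite cyclic union of nondegenerate intervals on which an iterate of $f$ is topologically exact is a spectral-decomposition theorem for \emph{continuous} piecewise monotone interval maps; it does not hold for the class considered here. A weak piecewise monotone cusp map is defined only on a finite union of open subintervals of $I$ and may be discontinuous, so there may be gaps in the domain. For instance, take two disjoint intervals $I_1, I_2 \subset I$ and let $f$ be affine, expanding, and onto $I$ on each; this is a weak piecewise monotone cusp map (the derivative conditions are vacuous), the nonwandering set is a Cantor set, and it carries a continuum of ergodic measures of positive entropy and positive Lyapunov exponent, all with Cantor support. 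Thus $\Supp(\mu)$ is not a union of intervals and there is no sub-interval $J_{i_0}$ to which exactness can be applied. The conclusion of the lemma is still true there (a positive-$\mu$-measure interval contains a cylinder, and a bounded iterate of that cylinder already covers the whole support), so the statement is not endangered, but your route to it is. Your Luzin/Egorov step gives only $\mu(B_{N_\epsilon})>1-\epsilon$, i.e.\ $\mu(B_j)\nearrow 1$; promoting this to equality at a single finite $j$ is precisely the nontrivial content of the lemma, and it needs an argument compatible with Cantor supports --- for example one exploiting the nested/disjoint structure of preimages of regularly returning intervals --- rather than a structure theorem for $\Supp(\mu)$ that the present setting does not support.
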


    \begin{prop}[Proposition 30 of \cite{Me:Cusp}] \label{prop:dvl}
         $\HD(\mu) = h_\mu / \chi_\mu$.
    \end{prop}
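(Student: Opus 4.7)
The plan is to prove the two inequalities $\HD(\mu) \geq h_\mu/\chi_\mu$ and $\HD(\mu) \leq h_\mu/\chi_\mu$ by computing the local dimension $d_\mu(x) = \lim_{r\to 0} \log\mu(B(x,r))/\log r$ at $\mu$-almost every $x$ and showing it equals $h_\mu/\chi_\mu$. Once that is established, the Young--Billingsley characterisation of Hausdorff dimension of a measure yields the proposition.

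First I would fix a small $\eta>0$ and choose a finite measurable partition $\mathcal{P}$ of $I$ whose elements have diameter less than some threshold (and whose boundaries are $\mu$-null); lift $\mathcal{P}$ to a partition $\widetilde{\mathcal{P}}$ of $Y$ via the projection $\Pi$. Since $\mu$ is ergodic with positive entropy, Shannon--McMillan--Breiman applied to the natural extension $(Y,F,\overline\mu)$ using the partition $\bigvee_{k=0}^{\infty}F^{-k}\widetilde{\mathcal{P}}$ refined on the past gives, for $\overline\mu$-a.e.\ $y$,
$$
-\frac{1}{n}\log\overline\mu\bigl(\widetilde{\mathcal{P}}_n(y)\bigr) \longrightarrow h_\mu,
$$
where $\widetilde{\mathcal{P}}_n(y) = \bigcap_{k=0}^{n-1} F^{-k}\widetilde{\mathcal{P}}(F^k y)$ corresponds via $\Pi$ to the $n$-th dynamical refinement on $I$. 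Combining this with the unstable manifold Theorem~\ref{thm:NeilUnstableManif}, the $n$-th preimage interval $\Pi F^{-n}V_y$ has length between $\rho(y)^{-1} e^{-n(\chi_\mu+\eta)}$ and $2\rho(y) e^{-n(\chi_\mu-\eta)}$ with bounded distortion on the inverse branch; after reversing the role of past and future (by also applying the theorem to $F^{-1}$, or using the symmetry built in by Rokhlin), this controls the length and geometry of $\Pi\widetilde{\mathcal{P}}_n(y)$.

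Putting these together: on a set of $\overline\mu$-full measure, for all large $n$ the projected atom $\Pi\widetilde{\mathcal{P}}_n(y)$ is contained in a ball of radius $e^{-n(\chi_\mu-\eta)}$ and contains a ball of radius $e^{-n(\chi_\mu+\eta)}$ about $\Pi y$, and has $\mu$-mass $e^{-n(h_\mu \pm \eta)}$. Taking $r = e^{-n\chi_\mu}$ (or comparable) one reads off
$$
\frac{h_\mu - \eta}{\chi_\mu+\eta} \leq \liminf_{r\to 0}\frac{\log\mu(B(\Pi y,r))}{\log r} \leq \limsup_{r\to 0}\frac{\log\mu(B(\Pi y,r))}{\log r} \leq \frac{h_\mu+\eta}{\chi_\mu-\eta},
$$
and letting $\eta\to 0$ yields $d_\mu(x) = h_\mu/\chi_\mu$ at $\mu$-a.e.\ $x$. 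The Young formula then gives $\HD(\mu) = h_\mu/\chi_\mu$.

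The main obstacle is the second half of the sandwich: showing that the projected dynamical atoms are not only \emph{small} (which is immediate from forward iteration and distortion along preimages) but also \emph{big enough} to contain a definite-size ball about $\Pi y$. That requires the non-trivial ``two-sided'' distortion and shrinking estimates of the Pesin/unstable-manifold construction in \cite{Me:Cusp} --- in particular that, along a typical backward orbit, cusps and discontinuities are avoided and neighbouring points land in the same dynamical atom up to the right scale. This is precisely where the hypothesis of positive Lyapunov exponent and the measurable functions $\alpha,\rho$ of Theorem~\ref{thm:NeilUnstableManif} enter, and handling the unbounded-derivative case is what the cusp-map machinery of \cite{Me:Cusp} was designed to do.
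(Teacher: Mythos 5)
Your overall route — computing the local dimension $d_\mu(x)$ at $\mu$-a.e.\ $x$ via Shannon--McMillan--Breiman for the mass and the unstable manifold theorem for the geometry, then invoking the Young/Billingsley characterisation of $\HD(\mu)$ — is the standard one and is essentially what Proposition~30 of \cite{Me:Cusp} does, in the tradition of Ledrappier. Two smaller imprecisions first: SMB is applied to the finite generating partition $\widetilde{\mathcal{P}}$ (not to the infinite join $\bigvee_{k\geq 0}F^{-k}\widetilde{\mathcal{P}}$), and $\mathcal{P}$ must refine the branch partition $\{I_1,\dots,I_p\}$ so that each atom $\mathcal{P}_n(x)$ is an interval on which $f^n$ is a diffeomorphism; neither is hard to arrange but both need saying.

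The genuine gap is the assertion that for a.e.\ $y$ and \emph{all large} $n$ the atom $\Pi\widetilde{\mathcal{P}}_n(y)$ both lies in, and contains, a ball of radius $e^{-n(\chi_\mu\pm\eta)}$ about $\Pi y$. This is false as stated. Applying Theorem~\ref{thm:NeilUnstableManif} at the point $F^n y$ gives an interval $\Pi F^{-n}V_{F^n y}$ about $\Pi y$ of length controlled by $\rho(F^n y)^{\pm1}e^{-n\chi}$, but $\rho$ is only measurable, not bounded, so $\rho(F^n y)$ is unbounded along the orbit; and independently, $\Pi y$ may sit arbitrarily close to $\partial\mathcal{P}_n(\Pi y)$ for a positive density of $n$. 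What is true, and what the cited proof uses, is a version of both estimates only along the (positive-density, hence a.e.\ bounded-gap) subsequence of $n$ for which $F^n y$ returns to a fixed Pesin block where $\alpha$ and $\rho$ are bounded; one then interpolates $\log\mu(B(x,r))/\log r$ between consecutive return scales, or equivalently runs a Besicovitch/regularly-returning-interval covering argument of the kind the paper uses elsewhere. You correctly flag this as the hard step, but as written your ``for all large $n$'' sandwich does not close, and the recurrence-to-a-Pesin-block device is the missing idea needed to close it.
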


\begin{dfn}[\cite{Graczyk:MetricAttractors}] An open interval $U$ is \emph{regularly returning} if $f^n(\partial U) \cap U = \emptyset$ for all $n > 0$. This is also called a \emph{nice interval} in the literature.
\end{dfn}
If $A$ is a connected component of $f^{-n}(U)$ and $B$ is a connected component of $f^{-m}(U)$ with $m \geq n$, it is easy to check that either $A\cap B = \emptyset$ or $B \subset A$, so  inverse images of regularly returning intervals are either nested or disjoint. Indeed, suppose $x \in \partial A \cap B$. Then $f^n(x) \in \partial U$ (since $f$ may be discontinuous, one uses that $x \in B$ to know that $f^n$ is defined on a neighbourhood of $x$), but $f^m(x) \in U$, contradiction. By Proposition~28 of \cite{Me:Cusp}, almost every point is contained inside arbitrarily small regularly returning intervals. 

Recall that $W$ is an open interval with $\mu(W) >0$ and $m(W)<\infty$. 
\begin{prop}
    There exist a regularly returning interval $U \subset W$ and  a constant $K> 0$ and a set $A \subset Y$ with the following properties:
    \begin{itemize}
        \item
            $\omu(A) >0$;
        \item 
            for $y \in A$, there is a $y' \in A$ such that $y \in V_{y'}$, $\rho(y') < K$, $\Pi V_{y'} \supset U$ and $U_y := V_{y'} \cap \Pi^{-1}U \subset A$.
    \end{itemize}
\end{prop}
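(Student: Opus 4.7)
The plan is to construct $A$ as a union of pullbacks of $U$ through Pesin unstable manifolds of \emph{good} base points, where good means uniform control on the constants $\alpha$, $\rho$ of Theorem~\ref{thm:NeilUnstableManif}. To begin, since $\alpha > 0$ and $\rho < \infty$ almost everywhere, one can fix $\alpha_0, K > 0$ and a measurable set $B \subset Y$ with $\alpha(y') \geq \alpha_0$, $\rho(y') \leq K$ on $B$, and $\omu(B)$ close enough to $1$ that $\omu(B \cap \Pi^{-1} W) > 0$ (using $\omu(\Pi^{-1}W) = \mu(W) > 0$).

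Next I would extract a regularly returning interval $U \subset W$ of diameter strictly less than $\alpha_0$ carrying positive mass of $\omu|_B$. By Proposition~28 of \cite{Me:Cusp}, almost every $x \in W$ lies in arbitrarily small regularly returning intervals, which (as $W$ is open) we may take inside $W$ and of diameter $< \alpha_0$. Let $\nu := \Pi_*(\omu|_{B \cap \Pi^{-1}W})$, a positive measure with $\nu \leq \mu$, so $\nu$-almost every $x$ enjoys the same property. The nesting-or-disjointness property of inverse images of regularly returning intervals (recalled in the paragraph following the definition of ``regularly returning'') lets us assign to each such $x$ the \emph{largest} regularly returning interval $U(x) \subset W$ of diameter $< \alpha_0$ containing $x$; the family $\{U(x)\}$ is then automatically pairwise disjoint and thus countable. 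Countable additivity yields one interval $U$ in this family with $\nu(U) > 0$, i.e., $\omu(B \cap \Pi^{-1} U) > 0$. Set $B' := B \cap \Pi^{-1} U$.

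For each $y' \in B'$ we have $\Pi y' \in U$ and $\diam U < \alpha_0 \leq \alpha(y')$, so $U \subset B(\Pi y', \alpha(y')) = \Pi V_{y'}$. Define
\[
A \ := \ \bigcup_{y' \in B'} \bigl(V_{y'} \cap \Pi^{-1}(U)\bigr),
\]
whose measurability follows from the standard measurability of the Pesin unstable structure. Since $y' \in V_{y'}$ and $\Pi y' \in U$ for every $y' \in B'$, we have $B' \subset A$ and hence $\omu(A) > 0$. Given $y \in A$, pick any $y' \in B'$ realising $y \in V_{y'} \cap \Pi^{-1}(U)$; then $y' \in A$ itself, $\rho(y') < K$, $\Pi V_{y'} \supset U$, and $U_y = V_{y'} \cap \Pi^{-1}(U) \subset A$ by construction.

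The main obstacle is the second step: producing a single regularly returning $U \subset W$ that is simultaneously small enough to sit inside the projected Pesin unstable manifolds over $B$ \emph{and} carries positive $\omu$-mass of $B$. The nesting-or-disjointness combinatorics of pullbacks of regularly returning intervals supplies the Vitali-type structure that converts the abundance of small regularly returning intervals into the existence of a single good one; measurability of $A$ is a secondary but genuine technical point that is handled by the standard Pesin framework.
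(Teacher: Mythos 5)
Your overall strategy matches the intended one (truncate $\alpha$ and $\rho$ on a positive-measure set $B$, find a regularly returning $U\subset W$ small enough to fit inside $\Pi V_{y'}$ for $y'\in B$ with $\omu(B\cap\Pi^{-1}U)>0$, then let $A$ be the union of the slices $V_{y'}\cap\Pi^{-1}(U)$), and Steps 1 and 3--5 are fine. However, Step 2 as written has a genuine gap: you invoke ``the nesting-or-disjointness property of inverse images of regularly returning intervals'' to conclude that the family $\{U(x)\}$ of \emph{distinct} regularly returning intervals is pairwise disjoint, hence countable. The property recalled in the paper applies only to the connected components of $f^{-n}(U)$ for a \emph{single fixed} nice interval $U$; the contradiction there is that $f^{m-n}(\partial U)\cap U\neq\emptyset$. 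Two different nice intervals $U_1,U_2$ can overlap without being nested --- niceness of $U_1$ constrains $f^n(\partial U_1)$ relative to $U_1$, and says nothing about $U_2$. Moreover, even granting a nesting structure among nice intervals through a fixed point, the ``largest'' such interval of diameter $<\alpha_0$ need not exist (an increasing union of nice intervals need not be nice, and the diameter bound need not be attained), so the assignment $x\mapsto U(x)$ is not well-defined.

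The conclusion of Step 2 is still correct and the fix is easy. Since $\nu:=\Pi_*(\omu|_{B\cap\Pi^{-1}W})\leq\mu$ is a nonzero finite measure, pick any $x_0\in\Supp(\nu)\cap W$ that lies in arbitrarily small regularly returning intervals (such $x_0$ exist: both conditions hold $\nu$-a.e.). Any regularly returning $U\subset W$ with $x_0\in U$ and $\diam U<\alpha_0$ then satisfies $\nu(U)>0$, i.e.\ $\omu(B\cap\Pi^{-1}U)>0$, which is all you need. Alternatively, the family of all regularly returning intervals contained in $W$ of diameter $<\alpha_0$ is an open cover of a full-$\nu$-measure set, and Lindel\"of gives a countable subcover, one member of which has positive $\nu$-measure. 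Either way, no nesting/disjointness of distinct nice intervals is needed. With this repair, your proof is correct and is, as far as one can tell, the argument the paper intends when it writes that the proposition ``follows from Theorem~\ref{thm:NeilUnstableManif} and the ubiquity of regularly returning intervals.''
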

	\begin{proof} This follows from Theorem~\ref{thm:NeilUnstableManif} and the ubiquity of regularly returning intervals. 
	\end{proof}
		
    Thus we can fix a regularly returning interval $U$, a corresponding set $A$ and for each $y \in A$ there is a set $U_y\subset A$ with $\Pi U_y = U$. Moreoever, $f^n$ maps $\Pi F^{-n}U_y$ diffeomorphically and with distortion bounded by $2$ onto $U$. 
    
    \begin{prop} \label{prop:nicenice}
    There exists a measurable partition $\xi$ of $Y$ such that, if $\xi(y)$ denotes the element of $\xi$ containing $y$ and $e(y)$ the first entry time of $y$ to $A$, then 
    $$
    F^{e(y)}\xi(y) = U_{F^{e(y)}y}.$$
    For any $y, y'$ in $Y$, $\Pi \xi(y)$ and $\Pi \xi(y')$ are either nested or disjoint, and $\xi(y)$ and $\xi(y')$ are either nested or disjoint. The entropy of $\mu$ is given by $h_\mu = H(F^{-1}\xi/\xi)$. 
    \end{prop}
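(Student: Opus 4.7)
The plan is to construct $\xi$ by pulling the Markov pieces $U_y$ back along the first-entry-time function. For $\omu$-almost every $y\in Y$---those with $e(y)<\infty$, guaranteed by Poincar\'e recurrence applied to the positive-measure set $A$---set
$$
\xi(y) := F^{-e(y)}\bigl(U_{F^{e(y)}y}\bigr),
$$
where $F^{-e(y)}$ is the single-valued inverse branch dictated by the backward-orbit data of $y$. On $A$ this reduces to $\xi(y)=U_y$, and the defining identity $F^{e(y)}\xi(y)=U_{F^{e(y)}y}$ is immediate from the construction.

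The first task is to show $\xi$ is a well-defined measurable partition modulo null sets. Measurability reduces to measurability of $e$ together with measurability of the assignment $y\mapsto U_y$ on $A$, which in turn follows from measurability of $\alpha$ and $\rho$ in Theorem~\ref{thm:NeilUnstableManif} together with a measurable selection of the reference point $y'$ supplied by the preceding proposition. For the nested-or-disjoint property, observe $\Pi U_y = U$ (because $\Pi V_{y'}\supset U$ and $U_y = V_{y'}\cap\Pi^{-1}U$), so $\Pi\xi(y)$ is precisely the connected component of $f^{-e(y)}(U)$ containing $\Pi y$. As noted just after the definition of regularly returning, pullbacks of $U$ are nested or disjoint, so the projections $\Pi\xi(y)$ and $\Pi\xi(y')$ are nested or disjoint. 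The lifts to $Y$ inherit this from the backward-orbit structure of the natural extension: two lifts can meet only when the relevant backward $F$-branches agree, forcing containment.

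The entropy formula will follow from Rohlin's formula, which for the invertible system $(Y,F,\omu)$ gives $h_{\omu}(F)=H(F^{-1}\xi\,/\,\xi)$; and $h_\mu=h_{\omu}(F)$ since $(Y,F,\omu)$ is the natural extension. Three hypotheses must be verified: (i) $F^{-1}\xi\geq\xi$; on $\{e(y)>0\}$ this is a direct calculation giving $(F^{-1}\xi)(y)=\xi(y)$ via $e(Fy)=e(y)-1$, and on $A$ one uses the nested-or-disjoint structure to see that the $F^{-1}\xi$-cell containing $y$ cannot cross out of $U_y$; (ii) $\bigvee_{n\geq 0}F^{-n}\xi$ separates points, which follows directly from the exponential shrinking $|\Pi F^{-n}V_y|\leq 2\rho(y)e^{-n(\chi-\eta)}$ of Theorem~\ref{thm:NeilUnstableManif}; and (iii) $\bigwedge_{n\geq 0}F^n\xi$ is $\omu$-trivial. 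The principal obstacle is (iii); I expect to establish it by combining Lemma~\ref{lem:exactness} (so that the forward images of any interval of positive $\mu$-measure eventually cover all of $I$ up to a $\mu$-null set) with ergodicity of $\mu$ to rule out non-trivial tail events on $Y$.
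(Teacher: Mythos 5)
The paper's own ``proof'' of this proposition consists of a single citation (``shown in Proposition 30 of \cite{Me:Trans}''), so there is no in-paper argument to compare against line by line; your proposal is a genuine reconstruction of what presumably lives in the reference. The strategy you adopt --- pull $U_{F^{e(y)}y}$ back by $F^{-e(y)}$, use the regularly-returning/nested-or-disjoint mechanism for the projection claim, and apply a Rohlin/Abramov-type formula for the entropy --- is the natural one and is almost certainly the route taken there.

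However, there is a real gap that your write-up skips: you assert that $\xi$ ``is a well-defined measurable partition'' and then discuss only measurability, but you never check that the assignment $y \mapsto F^{-e(y)}\bigl(U_{F^{e(y)}y}\bigr)$ defines a \emph{partition} at all, i.e.\ that $y' \in \xi(y)$ forces $\xi(y') = \xi(y)$. This is exactly the point where the construction could break: if $y'\in\xi(y)$ had $e(y')<e(y)$ then $\xi(y')$ would be a pullback of $U$ of strictly smaller depth, giving overlapping cells of different sizes. What saves the construction is the particular structure of $A$: if $z_1,z_2\in U_z$ and $F^{-n}z_1\in A$, then because $A\subset\Pi^{-1}U$ and $\Pi F^{-n}U_z$ is a component of $f^{-n}(U)$ meeting $U$ (hence, by nested-or-disjoint, contained in $U$), and because $F^{-n}z_2$ lies on the same unstable leaf as $F^{-n}z_1$ and projects into $U\subset\Pi V_{w_1}$, one gets $F^{-n}z_2\in V_{w_1}\cap\Pi^{-1}U = U_{F^{-n}z_1}\subset A$. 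Thus the set of backward entry times to $A$ is constant along each $U_z$, which in turn makes $e$ constant on each proposed cell and so makes $\xi$ a bona fide partition. Without this coherence argument the whole construction is unjustified, so you should include it. Beyond that, your step (iii) (triviality of $\bigwedge_{n\geq 0}F^{n}\xi$) is flagged as an expectation rather than a proof; the combination of Lemma~\ref{lem:exactness} with ergodicity is indeed the right ingredient, but as written it is a sketch, not an argument.
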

    \begin{proof} The claim about the entropy is shown in Proposition 30 of \cite{Me:Trans}. 
    \end{proof}
    Note that we have chosen a definition of $\xi$ corresponding to that of \cite{Me:Trans} rather than \cite{Me:Cusp} as things are slightly simpler this way. In particular, we get uniform bounds  in the following lemma with this definition of the partition.

Let $\psi := \phi + t\log|Df|$. Let 
$$
    (S_{-n})\psi(y) := \sum_{i=1}^n \psi \circ \Pi \circ F^{-i}(y).$$
    \begin{lem} \label{lem:snpsi}
        There are uniform (independent of $n, y$ and $y' \in \xi(y)$) upper and lower bounds on $(S_{-n}\psi)(y') - (S_{-n}\psi)(y)$ and $\lim_{n \to \infty} \left((S_{-n}\psi)(y') - (S_{-n}\psi)(y)\right)$ exists. 
        \end{lem}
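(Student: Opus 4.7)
The plan is to decompose $\psi = \phi + t\log|Df|$ and bound each piece along the backward orbit, leveraging the uniform constant $K$ built into the construction of $A$. Fix $y \in Y$ and $y' \in \xi(y)$. By Proposition~\ref{prop:nicenice}, $F^{e(y)}\xi(y) = U_{F^{e(y)}y}$, and by the proposition preceding it there exists $y'' \in A$ with $\rho(y'') < K$, $\Pi V_{y''} \supset U$, and $U_{F^{e(y)}y} \subset V_{y''}$. Write $z := F^{e(y)}y$ and $z' := F^{e(y)}y'$; both lie in $V_{y''}$, which will be the launching pad for both estimates.

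For the $t\log|Df|$ contribution, I would apply the third bullet of Theorem~\ref{thm:NeilUnstableManif} at $y''$ to each of $z$ and $z'$ and combine by the triangle inequality to get
$$
\sum_{i \geq 1} \bigl|\log|Df(\Pi F^{-i}z)| - \log|Df(\Pi F^{-i}z')|\bigr| < 2\log 2.
$$
Reindexing $j := i - e(y)$ and restricting to $j \geq 1$ transfers this to the backward orbit of $y$ and $y'$, yielding the uniform bound $2|t|\log 2$ on the derivative part of $(S_{-n}\psi)(y') - (S_{-n}\psi)(y)$ and absolute convergence of the tail. For the $\phi$ contribution, I would estimate the diameter: $\Pi F^{-j}\xi(y) \subset \Pi F^{-(j+e(y))}V_{y''}$, whose diameter is at most $2\rho(y'')e^{-(j+e(y))(\chi_\mu-\eta)} < 2K e^{-j(\chi_\mu-\eta)}$ by the closing line of Theorem~\ref{thm:NeilUnstableManif} (after fixing some $\eta < \chi_\mu$). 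H\"older regularity of $\phi$ with exponent $\alpha$ then gives $|\phi(\Pi F^{-j}y) - \phi(\Pi F^{-j}y')| \leq c \cdot e^{-j\alpha(\chi_\mu-\eta)}$ for a constant $c$ depending only on $K$, $\alpha$, $\eta$, $\chi_\mu$ and the H\"older data of $\phi$, and summation over $j$ gives a uniform geometric bound.

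Combining the two estimates yields the claimed uniform upper and lower bounds on $(S_{-n}\psi)(y') - (S_{-n}\psi)(y)$; since both series converge absolutely with the same uniform majorants, the limit as $n \to \infty$ exists. The main obstacle, and the reason for adopting the partition of \cite{Me:Trans} rather than that of \cite{Me:Cusp} as flagged in the remark preceding the lemma, is securing uniformity in $y$: the function $\rho$ is unbounded on $Y$ in general, so one cannot apply Theorem~\ref{thm:NeilUnstableManif} at $y$ or at $z$ directly, but must route through a reference point $y'' \in A$ where $\rho$ is uniformly controlled and then transfer the estimates to $z, z'$ using the bounded-distortion property on $V_{y''}$.
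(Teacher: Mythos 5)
Your proposal is correct and matches the approach the paper indicates (decomposing $\psi = \phi + t\log|Df|$, using H\"older regularity together with the exponential decrease of preimage diameters for the $\phi$-part and the summable distortion bound of Theorem~\ref{thm:NeilUnstableManif} for the derivative part), with uniformity secured by routing everything through the reference point $y'' \in A$ where $\rho < K$. The paper defers details to Lemma~31 of \cite{Me:Trans}; your write-up supplies the same argument in more explicit form.
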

        \begin{proof}
            See the proof of Lemma 31 of \cite{Me:Trans}. Exponential decrease of preimages gives a bound on the H\"older part, and Theorem~\ref{thm:NeilUnstableManif} takes care of the derivative part. 
            \end{proof}

    Define $\Phi(y,\cdot) : \xi(y) \to \arr$ by
        $$
        \Phi(y,y') := \lim_{n\to \infty} e^{(S_{-n}\psi)(y') - (S_{-n}\psi)(y)},
        $$
	so $\Phi(y,\cdot)$ is uniformly bounded away from zero and infinity, from Lemma~\ref{lem:snpsi}. 

        \begin{lem} \label{lem:9}
            Suppose $\mu$ is absolutely continuous with respect to $m$. Then $h_\mu = t \chi_\mu + \int \phi d\mu$. 
            \end{lem}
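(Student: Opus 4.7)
The plan is to use the Rokhlin entropy formula $h_\mu = H(F^{-1}\xi/\xi)$ provided by Proposition~\ref{prop:nicenice}, identify the conditional measures of $\omu$ on the elements of $\xi$ using the hypothesis $\mu \ll m$, and evaluate the resulting conditional entropy via the $(\phi,t)$-conformality of $m$. The target identity is $h_\mu = \int \psi \, d\mu = t\chi_\mu + \int \phi\, d\mu$, where $\psi = \phi + t\log|Df|$.

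First I would write
\[
H(F^{-1}\xi/\xi) = -\int \log \omu_y(F^{-1}\xi(y))\, d\omu(y),
\]
where $\omu_y$ denotes the conditional measure of $\omu$ on $\xi(y)$. The definition of $\xi$ from Proposition~\ref{prop:nicenice} ensures that $\Pi : \xi(y) \to \Pi\xi(y)$ is a bijection onto a connected component of $f^{-e(y)}(U)$ mapping diffeomorphically onto $U$, since this injectivity is inherited from the corresponding property of the sets $V_{y'}$ in Theorem~\ref{thm:NeilUnstableManif}. Under $\mu \ll m$ with density $h := d\mu/dm$, I would then identify $\omu_y$ (transferred via $\Pi$) with an absolutely continuous measure on $\Pi\xi(y)$ whose density with respect to $m$ is proportional to $\Phi(y, \cdot) \cdot h \circ \Pi$, with $\Phi$ the holonomy function introduced after Lemma~\ref{lem:snpsi}. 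The appearance of $\Phi$ reflects the ``Jacobian'' of the disintegration in the natural extension and records how backward-orbit fibers compare.

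Next, $(\phi,t)$-conformality of $m$ applied along the diffeomorphic branch $f : \Pi F^{-1}\xi(y) \to \Pi\xi(y)$ yields
\[
m(\Pi\xi(y)) = \int_{\Pi F^{-1}\xi(y)} e^{\psi(x)}\, dm(x).
\]
Combined with the conditional density description above and with the uniform distortion bounds from Theorem~\ref{thm:NeilUnstableManif} and Lemma~\ref{lem:snpsi}, this should give
\[
-\log \omu_y(F^{-1}\xi(y)) = \psi(\Pi F^{-1}y) + G(y) - G(F^{-1}y),
\]
up to a term whose $\omu$-integral vanishes, where $G$ is a measurable function built from $\log h$ and $\log \Phi$. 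Integrating against the $F$-invariant $\omu$ cancels the coboundary and, using $\Pi_*\omu = \mu$ and $F$-invariance, gives $h_\mu = \int \psi\, d\mu$, as required.

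The main obstacle is identifying the conditional measures $\omu_y$ in terms of $\Phi$ and $h\circ\Pi$: this is essentially a Pesin-style absolute continuity statement on the natural extension, converting the base hypothesis $\mu \ll m$ into a fiberwise description involving the holonomy $\Phi$. A secondary difficulty is that $\log h$ need not be $\mu$-integrable, making the coboundary cancellation a formal manipulation that must be justified by truncation combined with the uniform bounds on $\Phi$ from Lemma~\ref{lem:snpsi}. The overall argument parallels, with the conformal measure $m$ in place of Lebesgue, the strategy used for the analogous statement in \cite{Me:Trans}.
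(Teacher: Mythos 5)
Your approach is genuinely different from the paper's, and it has a real gap at the step you yourself flag as the main obstacle.

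The paper proves Lemma~\ref{lem:9} via a local-dimension/covering argument: it takes the generating partition $\cP$, observes that along a subsequence $n_j$ the cylinder $\cP_{n_j}(x)$ coincides with the projection of a $F^{-n_j}\xi$-atom, uses Lemma~\ref{lem:snpsi} and $(\phi,t)$-conformality to show $-\tfrac{1}{n}\log m(\cP_{n_j}(x)) \to t\chi_\mu + \int\phi\,d\mu$, and invokes Shannon--McMillan--Breiman for $-\tfrac{1}{n}\log\mu(\cP_{n_j}(x)) \to h_\mu$. Setting $\gamma := h_\mu - t\chi_\mu - \int\phi\,d\mu$, a Vitali-type covering of $X\cap W$ by these cylinders yields $\mu(W)=0$ if $\gamma>0$ and a contradiction with $\mu\ll m$ if $\gamma<0$. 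This argument needs only an \emph{estimate} of the cylinder measures, never an identification of the $\xi$-conditionals.

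You instead want to compute $h_\mu = H(F^{-1}\xi/\xi)$ exactly by identifying $\omu_y$ on $\xi(y)$ as proportional to $\Phi(y,\cdot)\,(h\circ\Pi)\,dm$. That identification is precisely the hard direction: in Pesin theory, absolute continuity of the conditional measures on unstable leaves (the SRB property) is the statement that is \emph{equivalent} to the entropy equality, and $\mu\ll m$ on the base does not trivially yield it. The Rohlin conditional $\omu_y$ is a disintegration of the lifted measure $\omu$ on $Y$, which sees the full backward-orbit structure over $\Pi\xi(y)$; it is not obtained by pushing $\mu$ restricted to $\Pi\xi(y)$ through $\Pi^{-1}$. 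Indeed, the paper proves the converse link (entropy equality $\Rightarrow p=q$ $\Rightarrow$ absolute continuity) in Proposition~\ref{lem:RuelleB} using the comparison of $p$ and $q$, and deliberately does not attempt to go from $\mu\ll m$ directly to $p=q$. Without an independent proof of the fiberwise identification, your argument is circular or incomplete. The truncation issue you raise for $\log h$ is secondary by comparison, but it is an additional reason the paper's covering argument is the more economical route: SMB absorbs all integrability questions into the a.e.\ convergence of information.
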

            \begin{proof}
                Let $\cP$ denote the generating partition $\{I_1, \ldots, I_p\} \vee \{I\setminus U, U\}$ (see Proposition~29 of \cite{Me:Cusp}). Denote by $\cP_n(x)$ the element of $\bigvee_{i=0}^n f^{-i}\cP$. For $\mu$-almost every $x$ in $I$, there is a sequence $n_j = n_j(x)$ tending to infinity for which $\cP_{n_j}(x)$ is the projection of an element of $F^{-n}\xi$.  It follows that, using Lemma~\ref{lem:snpsi} and conformality, 
	$$
	-\frac{1}{n} \log m\left(\cP_{n_j}(x) \right) $$
	converges to $t \chi_\mu + \int \phi d\mu$. Meanwhile, the Shannon McMillan-Breiman Theorem says that 
	$$
	-\frac{1}{n} \log \mu\left(\cP_{n_j}(x) \right) $$
	converges to $h_\mu$ almost everywhere.  Set $\gamma := h_\mu - t \chi_\mu - \int \phi d\mu$. Thus for a set $X$ with $\mu(X) = 1$ and every $x\in X$, there are arbitrarily large $n$ and open sets $P_n(x) \subset W$ with 
	$$
	\left|
	-\frac{1}{n} \log \mu\left(P_n(x) \right) 
	+\frac{1}{n} \log m\left(P_n(x) \right)  -\gamma \right| < |\gamma/2|.
$$
	Now $0 < \mu(X \cap W), m(X\cap W) < \infty$. 
If $\gamma >0$ we deduce, covering $X$ by such sets, for all large $N$,
	$$\mu(W) = \mu(X \cap W)  \leq e^{-N\gamma/2}m(W)< m(W)  < \infty.$$ 
	Letting $N$ tend to infinity we deduce $\mu(W) = 0$, a contradiction. 
	On the other hand, if $\gamma < 0$ then similarly 
	$$\mu(X \cap W)  \geq e^{-N\gamma/2}m(X\cap W)  \geq m(X\cap W).$$ 
	Letting $N \to \infty$ we derive a contradiction with absolute continuity, as $m(X\cap W) >0$. 
            \end{proof}
	Note we actually proved something extra in deriving a contradiction from $\gamma >0$, namely that $h_\mu \leq t\chi_\mu +\int \phi d\mu$, which we shall reprove shortly. Also, in the proof, we could take our cover of $X$ to be pairwise disjoint because of the regularly returning property. Even without the regularly returning property, the Besicovitch Covering Theorem would have done the job.

            Let $p(y,\cdot)$ denote the Rohlin decomposition of $\omu$ with respect to $\xi$, so, writing $\xi_{-n}$ for $F^{-n}\xi$ for clarity,
    \begin{equation} \label{eqn:ph}
     n h_\mu = -\int_Y \log p(y, \xi_{-n}(y)) d\omu,
    \end{equation}
    as per (11) of \cite{Me:Cusp}. We note that $p(y, \xi_{-n}(y)) > 0$ almost everywhere. 
    Let 
    $$ q(y, dz) := \frac{\Phi(y,z) dm_{\xi(y)}(z)}{\int_{\xi(y)} \Phi(y,y') dm_{\xi(y)}(y')},
    $$
    where $m_{\xi(y)}$ is the pullback by $\Pi_{|\xi(y)}$ of the conformal measure $m$ restricted to $\Pi \xi(y)$. Because $\Supp(m) \supset \Supp(\mu)$, and elements of $\xi_{-n}$ project onto open intervals, $q(y, \xi_{-n}(y))>0$ $\omu$-almost everywhere. However, $q(\cdot,\xi_{-n}(\cdot))$ may be positive on sets (of zero $\omu$-measure) where $p(\cdot,\xi_{-n}(\cdot))$ is not. The function $q$ is our best guess as to how $p$ would look, were $\mu$ absolutely continuous, informed by the change of variables formula and the notion that on elements of $\xi_{-n}$ for large $n$, the densities should be almost constant. 
    \begin{lem}
        \begin{equation} \label{eqn:qchi}
    -\int \log q(y, \xi_{-n}(y)) d\omu = n \left(t\chi_\mu +  \int \phi d\mu\right).
    \end{equation}
    \end{lem}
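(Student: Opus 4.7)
The strategy is to reduce the claim to the pointwise factorisation
\[
q(y, \xi_{-n}(y)) \;=\; e^{-(\hat{S}_n \psi)(y)} \, \frac{D(F^n y)}{D(y)},
\]
where $(\hat{S}_n \psi)(y) := \sum_{i=0}^{n-1}\psi\circ\Pi\circ F^i(y)$ is the standard forward Birkhoff sum under $F$ and $D(y) := \int_{\xi(y)}\Phi(y,z)\,dm_{\xi(y)}(z)$ is the normalisation from the definition of $q$. Assuming this identity, taking $-\log$ of both sides and integrating against $\omu$ yields
\[
-\int \log q(y,\xi_{-n}(y))\,d\omu
= \int (\hat{S}_n \psi)(y)\,d\omu + \int \log D(y)\,d\omu - \int \log D(F^n y)\,d\omu.
\]
By $F$-invariance of $\omu$, the last two integrals are equal (provided $\log D$ is $\omu$-integrable) and cancel, leaving $\int (\hat{S}_n \psi)\,d\omu = \sum_{i=0}^{n-1}\int \psi\circ\Pi\circ F^i\,d\omu = n\int \psi\,d\mu = n(t\chi_\mu + \int \phi\,d\mu)$, which is the claim.

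The pointwise identity is established by pushing forward the integral $N(y, n) := \int_{\xi_{-n}(y)}\Phi(y,z)\,dm_{\xi(y)}(z)$ from $\xi_{-n}(y)$ onto $\xi(F^n y)$ via $F^n$. There are two ingredients. First, since $f^n$ restricts to a diffeomorphism $\Pi\xi_{-n}(y) \to \Pi\xi(F^n y)$, iterating the $(\phi,t)$-conformality relation (\ref{eqn:conformality}) gives the change-of-variables formula
\[
\int_{\xi_{-n}(y)} H(z)\,dm_{\xi(y)}(z) \;=\; \int_{\xi(F^n y)} H(F^{-n}\tilde z)\, e^{-(\hat S_n \psi)(F^{-n}\tilde z)}\,dm_{\xi(F^n y)}(\tilde z),
\]
the Jacobian factor being the lift of the conformal density $e^{-S_n\phi}|Df^n|^{-t}$ evaluated along the backward branch. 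Second, $\Phi$ is an $F$-cocycle: directly from the definition one obtains $\Phi(y,z) = e^{\psi(\Pi y) - \psi(\Pi z)}\Phi(Fy, Fz)$ (by comparing the defining limits and telescoping a single term), hence $\Phi(y,z) = e^{(\hat S_n\psi)(y) - (\hat S_n\psi)(z)}\Phi(F^n y, F^n z)$, valid because the nesting $F\xi(y)\subset \xi(Fy)$ keeps $F^nz$ in $\xi(F^n y)$. Evaluating at $z = F^{-n}\tilde z$ (where $(\hat S_n\psi)(F^{-n}\tilde z) = (S_{-n}\psi)(\tilde z)$), substituting into the change of variables, and letting the cocycle factor pair with the conformal Jacobian leaves the integrand $\Phi(F^n y, \tilde z)\,dm_{\xi(F^n y)}(\tilde z)$ times the $\tilde z$-independent prefactor $e^{-(\hat S_n\psi)(y)}$, so $N(y, n) = e^{-(\hat{S}_n\psi)(y)}\,D(F^n y)$, which is the desired identity.

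The main technical point is the $\omu$-integrability of $\log D$ required to legitimise the $F$-invariance cancellation (otherwise the integral is a priori of the form $\infty - \infty$). By Lemma~\ref{lem:snpsi}, $\Phi(y,\cdot)$ is bounded between uniform positive constants on each $\xi(y)$, so $D(y) \asymp m(\Pi\xi(y))$, reducing the question to integrability of $|\log m(\Pi\xi(y))|$. Each $\Pi\xi(y)$ is a connected component of $f^{-e(y)}(U)$ for the regularly-returning $U\subset W$ fixed in the proposition preceding Lemma~\ref{lem:snpsi}; combining $m(W)<\infty$ (so $m(\Pi\xi(y)) \leq m(U)$ is bounded above) with the bounded distortion of $f^{e(y)}$ on $\Pi\xi(y)$ (from Theorem~\ref{thm:NeilUnstableManif}) and $\omu$-integrability of the return time $e(y)$, one controls $\log m(\Pi\xi(y))$ in $L^1(\omu)$. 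The delicate algebraic step is checking that the exponential factors from conformality and from the $\Phi$-cocycle combine precisely to give the clean $e^{-(\hat S_n\psi)(y)}$ prefactor without leftover $\tilde z$-dependence; this relies on the choice of $\xi$ (following \cite{Me:Trans} rather than \cite{Me:Cusp}, per the remark after Proposition~\ref{prop:nicenice}) that makes $\Pi$ a bijection on each atom.
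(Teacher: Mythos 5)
Your overall strategy is the right one --- establish a pointwise cocycle identity for $q(y,\xi_{-n}(y))$, then integrate and use $F$-invariance to cancel the normalisation terms --- and the discussion of $\omu$-integrability of $\log D$ (via Kac for the first entry time and bounded distortion on $\Pi\xi(y)$) correctly identifies the technical point one must check to justify the cancellation. This is essentially the shape of the argument in Proposition~32 of \cite{Me:Trans}. However, the algebraic core, the claimed factorisation $N(y,n) = e^{-(\hat S_n\psi)(y)}D(F^n y)$, does not hold as written, because the two exponential factors you say ``pair and cancel'' in fact have the same sign and compound.

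To be concrete: your cocycle gives
$\Phi(y,F^{-n}\tilde z) = e^{(\hat S_n\psi)(y)}\,e^{-(S_{-n}\psi)(\tilde z)}\,\Phi(F^n y,\tilde z)$,
while the conformal change of variables $x\mapsto \tilde x = f^n x$ contributes a Jacobian $e^{-S_n\psi(f^{-n}\tilde x)} = e^{-(S_{-n}\psi)(\tilde z)}$. Their product is
$e^{(\hat S_n\psi)(y)}\,e^{-2(S_{-n}\psi)(\tilde z)}\,\Phi(F^n y,\tilde z)$,
which is neither $\tilde z$-independent times $\Phi(F^n y,\tilde z)$ nor carries the prefactor $e^{-(\hat S_n\psi)(y)}$ you need. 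Integrating the resulting expression does not produce $n\int\psi\,d\mu$ after the invariance cancellation, so the proof as written does not close.

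The cleaner observation is that the factorisation you want does hold if the density of $q(y,\cdot)$ at $z$ is taken to be $\Phi(z,y) = \lim_n e^{(S_{-n}\psi)(y)-(S_{-n}\psi)(z)}$ rather than $\Phi(y,z)$. Running your two-ingredient computation with $\Phi(F^{-n}\tilde z,y)$ in the integrand, the exponential factor $e^{+(S_{-n}\psi)(\tilde z)}$ generated by the telescoping of $\Phi(F^{-n}\tilde z,y)$ genuinely cancels the conformal Jacobian $e^{-(S_{-n}\psi)(\tilde z)}$, and the leftover is exactly $e^{-(\hat S_n\psi)(y)}\int_{\xi(F^n y)}\Phi(\tilde z, F^n y)\,dm_{\xi(F^n y)}(\tilde z)$. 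This orientation of the density is also what one expects from the Pesin--Ledrappier heuristic (the conditional on an unstable leaf is the normalised limit of pushed-forward reference measure, whose density ratio at $z$ versus $y$ is $\lim_n e^{(S_{-n}\psi)(y)-(S_{-n}\psi)(z)}$). So either there is a sign slip in your reading of the displayed definition of $q$, or, if you take that definition literally, you must flag that it conflicts with the conclusion of the lemma; in either case the ``pairing'' step needs to be rewritten, carefully tracking whether each exponential involves $(S_{-n}\psi)(\tilde z)$ or $(\hat S_n\psi)(y)$, since $(\hat S_n\psi)(y) = (S_{-n}\psi)(F^n y) \neq (S_{-n}\psi)(\tilde z)$ for $\tilde z\in\xi(F^n y)$ with $\tilde z\neq F^n y$.
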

    \begin{proof}
        As shown in the proof of Proposition 32 of \cite{Me:Trans}.
    \end{proof}
    Comparing $q$ and $p$ will, using equations (\ref{eqn:ph}) and (\ref{eqn:qchi}), allow us to relate $h_\mu$ and $\chi_\mu$.
    Define $\overline{\nu}$ on measurable subsets of $Y$ by 
    $$
    \overline{\nu}(B) = \int_Y q(y,B) d\omu(y).
	$$
	Denote by $Y_n$ the quotient space $Y|\xi_{-n}$ and by $\overline{\nu_n}$ and $\omu_n$ the corresponding push-forwards of $\overline{\nu}$ and $\omu$ under the quotient map. 
    For each point $V_n \in Y_n$, we define $q_n(V_n) := q(y, V)$ and $p_n(V_n):= p(y,V)$ for any $y \in V \subset Y$, where $V$ is the element of $\xi_{-n}$ which projects to $V_n$. One can check that $p_n >0$ almost everywhere with respect to $\omu_n$. 
    \begin{lem} \label{lem:qp}
        Almost everywhere with respect to $\omu_n$,
     $$
     q_n/p_n = d\overline{\nu}_n/d\omu_n.
     $$
 \end{lem}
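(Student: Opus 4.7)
The plan is to verify the Radon--Nikodym identity directly: for an arbitrary measurable $E \subset Y_n$ contained in the $\omu_n$-full-measure set $\{p_n > 0\}$, I will show that $\int_E (q_n/p_n) \, d\omu_n = \overline{\nu}_n(E)$, which yields the claimed almost everywhere equality. Let $\pi_n : Y \to Y_n$ denote the quotient map and set $B := \pi_n^{-1}(E) \subset Y$, a union of elements of $\xi_{-n}$. On each such $V \in \xi_{-n}$ with $V \subset B$, the function $(q_n/p_n)\circ \pi_n$ is the constant $q(y,V)/p(y,V)$ for any $y \in V$, and is well defined there. Thus $\int_E (q_n/p_n)\, d\omu_n = \int_Y 1_B \cdot (q_n/p_n) \circ \pi_n \, d\omu$.

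The key tool is the Rohlin disintegration of $\omu$ along $\xi$, whose conditional probabilities are precisely the $p(y,\cdot)$. Since $\xi_{-n}$ refines $\xi$, on each fiber $\xi(y)$ the partition $\xi_{-n}$ restricts to a countable collection of sub-elements $V \subset \xi(y)$. Integrating $1_B \cdot (q_n/p_n) \circ \pi_n$ against $p(y,\cdot)$ on the fiber yields
\[
\sum_{V \subset \xi(y),\, V \subset B} \frac{q(y,V)}{p(y,V)} \cdot p(y,V) \;=\; \sum_{V \subset \xi(y),\, V \subset B} q(y,V) \;=\; q(y,B),
\]
using the countable additivity of $q(y,\cdot)$ together with the fact that it is concentrated on $\xi(y)$. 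Integrating this identity over $y$ against $\omu$ and invoking the definition $\overline{\nu}(B) = \int_Y q(y,B) \, d\omu(y)$, one concludes that the pulled-back integral equals $\overline{\nu}(B) = \overline{\nu}_n(E)$, completing the verification.

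I expect no serious obstacle beyond bookkeeping. The one point needing care is the $\omu_n$-null set $\{p_n = 0\}$, on which $q_n/p_n$ is undefined; as already noted that $p_n > 0$ holds $\omu_n$-almost everywhere, this is harmless, and one simply assigns $q_n/p_n$ an arbitrary value there. Monotone convergence legitimises the interchange of the countable sum with the conditional integral, as in the analogous computation for Proposition~32 of \cite{Me:Trans}.
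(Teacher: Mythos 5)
Your argument is correct but takes a genuinely different route from the paper. You verify the Radon--Nikodym identity directly: pulling back the integral to $Y$, disintegrating $\omu$ along $\xi$, expanding the fiber integral as a countable sum over the elements of $\xi_{-n}$ inside $\xi(y) \cap B$ (all of which have $p(y,V)>0$ thanks to the restriction $E \subset \{p_n>0\}$, so the cancellation $\frac{q(y,V)}{p(y,V)}\cdot p(y,V) = q(y,V)$ is legitimate), and then invoking countable additivity of $q(y,\cdot)$ plus the definition of $\overline\nu$. The paper instead runs an approximation argument: it fixes constants $\kappa, c, \varepsilon$ and works on sets $H_n$ where $p_n>\kappa$ and $|q_n - cp_n|<\varepsilon$, shows the two sides of the identity differ by at most $2\varepsilon/\kappa^2$ on such sets, then lets $\varepsilon\to 0$ and $\kappa\to 0$. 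Your approach is more transparent and shows in one stroke why the identity holds; the paper's version sidesteps having to justify the interchange of the countable sum with the conditional integral (which you correctly note is handled by monotone convergence) at the cost of a limiting argument. One point you should make explicit: since $d\overline\nu_n/d\omu_n$ must be read as the density of the absolutely continuous part of $\overline\nu_n$ (as the paper itself acknowledges, $\overline\nu_n$ may have a singular part), you need one extra line after the identity $\int_E (q_n/p_n)\,d\omu_n = \overline\nu_n(E)$ for $E\subset\{p_n>0\}$, namely that this forces $\overline\nu_n$ restricted to $\{p_n>0\}$ to be absolutely continuous, hence the singular part of $\overline\nu_n$ is carried by the $\omu_n$-null set $\{p_n=0\}$, and therefore $q_n/p_n$ agrees $\omu_n$-a.e.\ with the Radon--Nikodym derivative of the absolutely continuous part.
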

 \begin{proof}
    Let $\kappa, c, \varepsilon >0$. Let $H_n \subset Y_n$  be a set on which $p_n > \kappa > 0$ and $|q_n - c p_n | < \varepsilon$. Let $H$ be the corresponding subset of $Y$. Then $|q(y, H) - c p(y,H)| < \varepsilon/\kappa$ 
    almost everywhere (noting that $q, p \leq 1$). Let $H^* := \{y : \xi(y) \cap H \ne \emptyset \}$. On $Y \setminus H^*$, $q(y, H) = p(y,H) =0$, while on $H^*$, $p(y, H^*) = 1 \leq p(y, H)/\kappa$. Integrating the latter gives $\omu(H^*) \leq \omu(H)/\kappa$. Writing $\theta:= \int_Y \left( q(y,H) -cp(y,H) \right) d\omu$, we derive that $|\theta| \leq \varepsilon \omu(H) /\kappa^2$. 
     Thus
     \begin{eqnarray*}
    \int_{H_n} \frac{d\overline{\nu}_n}{d\omu_n} d\omu_n &=&  
        \overline{\nu_n}(H_n) = \int_Y q(y, H) d\omu = \theta + \int_Y cp(y, H) d\omu \\
        &=& \theta +  \int_H c d\omu \\
        &=& \theta + \int_{H_n} q_n/p_n d\omu_n + \int_{H_n} (c - q_n/p_n) d\omu_n.
        \end{eqnarray*}
        Therefore 
        $$
    \int_{H_n} \frac{d\overline{\nu}_n}{d\omu_n} d\omu_n =  \int_{H_n} q_n/p_n d\omu_n + \varepsilon_*
        $$
    for some $|\varepsilon_*| \leq \omu_n(H_n) 2\varepsilon/\kappa^2$. We deduce that for any set $J_n \subset Y_n$ with $p_n > \kappa>0$ on $J_n$, 
        $$
        \int_{J_n} \left| \frac{d\overline{\nu}_n}{d \omu_n} - \frac{q_n}{p_n} \right| d\omu_n \leq 2\varepsilon/\kappa^2. 
        $$
    Letting $\varepsilon$ and then $\kappa$ go to zero,  the lemma follows. 
    \end{proof}

    \begin{prop} \label{lem:RuelleB}
        $h_\mu  \leq t \chi_\mu + \int \phi d\mu.$ If equality holds, then $q=p$ and $\mu$ is absolutely continuous with respect to $m$. 
        \end{prop}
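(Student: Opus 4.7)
The plan is to combine the two identities (\ref{eqn:ph}) and (\ref{eqn:qchi}), use Lemma~\ref{lem:qp} to identify the ratio $q_n/p_n$ with a Radon--Nikodym derivative, and then apply Jensen's inequality. Passing to the quotient $Y_n$, subtracting gives
$$
n\bigl(h_\mu - t\chi_\mu - \textstyle\int \phi \, d\mu\bigr) = \int_{Y_n} \log(q_n/p_n) \, d\overline{\mu}_n.
$$
Before invoking Jensen I would note that $\overline{\nu}$ is a probability measure, since $\overline{\nu}(Y) = \int_Y q(y, \xi(y)) \, d\overline{\mu} = 1$ by construction of $q$, so $\overline{\nu}_n(Y_n) = 1$. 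By Lemma~\ref{lem:qp}, $q_n/p_n$ agrees with the Radon--Nikodym derivative $d\overline{\nu}_n/d\overline{\mu}_n$ $\overline{\mu}_n$-almost everywhere, hence by concavity of $\log$
$$
\int \log(q_n/p_n) \, d\overline{\mu}_n \leq \log \int (q_n/p_n) \, d\overline{\mu}_n \leq \log \overline{\nu}_n(Y_n) = 0,
$$
where the second inequality accounts for a possible singular part of $\overline{\nu}_n$ with respect to $\overline{\mu}_n$. Dividing by $n$ proves the claimed inequality.

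For the equality case: equality in Jensen forces $d\overline{\nu}_n/d\overline{\mu}_n$ to be constant $\overline{\mu}_n$-almost everywhere, while equality in the second inequality forces both this constant to equal $1$ and $\overline{\nu}_n$ to have no singular part with respect to $\overline{\mu}_n$. Hence $\overline{\nu}_n = \overline{\mu}_n$ for every $n$. Since $\{\xi_{-n}\}$ refines to points $\overline{\mu}$-almost everywhere (it generates the full $\sigma$-algebra modulo $\overline{\mu}$), a standard martingale-type convergence argument lifts this to $\overline{\nu} = \overline{\mu}$ on $Y$, which is the statement $q=p$. Projecting by $\Pi$, each conditional $\Pi_* q(y, \cdot)$ has density proportional to $\Phi(y, \cdot)$ with respect to $m|_{\Pi\xi(y)}$ and is thus absolutely continuous with respect to $m$; integrating over $y$ yields $\mu = \Pi_*\overline{\mu} = \Pi_*\overline{\nu} \ll m$, as required.

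The main obstacle is the equality case, specifically ensuring that $\overline{\nu}_n$ has no singular part relative to $\overline{\mu}_n$ (Lemma~\ref{lem:qp} only controls the absolutely continuous part): such a singular part would strictly reduce $\int (q_n/p_n) \, d\overline{\mu}_n$ below $\overline{\nu}_n(Y_n) = 1$, breaking the second inequality, so cannot occur when equality holds. Lifting $\overline{\nu}_n = \overline{\mu}_n$ to $\overline{\nu} = \overline{\mu}$ using the generating property of $\{\xi_{-n}\}$, and the concluding fibrewise absolute-continuity step for $\mu$, are then routine.
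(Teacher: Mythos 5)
Your proof follows essentially the same route as the paper: subtract the two entropy-type identities (\ref{eqn:ph}) and (\ref{eqn:qchi}), apply Jensen's inequality to $\log(q_n/p_n)$ with respect to the probability measure $\omu_n$, and use Lemma~\ref{lem:qp} to bound $\int (q_n/p_n)\,d\omu_n$ by $\overline{\nu}_n(Y_n)=1$. Your treatment is correct and is in fact slightly more careful than the published proof: you spell out why $\overline{\nu}$ is a probability measure, you distinguish explicitly between the two points where equality must hold (strict concavity of $\log$ forcing the ratio to be constant, and the absence of a singular part forcing the constant to be $1$), and you give the short argument that fibrewise absolute continuity of $q(y,\cdot)$ with respect to pulled-back $m$ implies $\mu\ll m$ once $\overline\nu=\omu$. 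The paper compresses all of this into one sentence (``Equality can only hold if $q_n=p_n$ almost everywhere. If this holds for all $n$, then $q=p$ almost everywhere, so $\mu$ is absolutely continuous.''), so there is no substantive difference of method, only of exposition.
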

    \begin{proof}
        By equations (\ref{eqn:ph}) and (\ref{eqn:qchi}), 
        $$
        n\left(h_\mu - t\chi_\mu - \int \phi d\mu \right) = \int_{Y_n} \log \frac{q_n}{p_n} d\omu_n \leq \log \int_{Y_n} \frac{q_n}{p_n} d\omu_n,
        $$
        the latter by concavity of logarithm. 
        But by Lemma~\ref{lem:qp}, the latter expression is bounded above by $\log \overline{\nu_n}(Y) = 0$ (it may be negative if there is a set $S$ with $\omu_n(S) = 0$ and $\overline{\nu}(S)>0$). Thus,
        $$
            h_\mu \leq t\chi_\mu + \int \phi d\mu.
        $$
        Equality can only hold if $q_n = p_n$ almost everywhere. If this holds for all $n$, then $q=p$ almost everywhere, so $\mu$ is absolutely continuous.
    \end{proof}
        \begin{lem} \label{lem:den}
            Suppose $h_\mu = t\chi_\mu +\int \phi d\mu$. Then the density of $\mu$ with respect to $m$ is bounded away from zero on the inteval $U$. 
            \end{lem}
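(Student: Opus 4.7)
The starting point is Proposition~\ref{lem:RuelleB}: the equality $h_\mu = t\chi_\mu + \int\phi\,d\mu$ forces $q_n = p_n$ $\omu_n$-almost everywhere for every $n$, and I will use this in the stronger form that the conditional measures $p(y,\cdot)$ and $q(y,\cdot)$ agree on $\xi(y)$ for $\omu$-almost every $y$ (the $\xi_{-n}$ generate the $\sigma$-algebra on $\xi(y)$ modulo null sets). The plan is then to estimate $\mu(B) = \omu(\Pi^{-1}(B))$ from below by a constant multiple of $m(B)$ for every measurable $B\subset U$, by bounding it below by $\omu(\Pi^{-1}(B)\cap A)$ and opening this up via the Rohlin decomposition for $\xi$, in which the elements landing in $A$ are explicitly pinned down by the definition of $q$.

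A preliminary combinatorial observation is that partition elements of $\xi$ are either contained in $A$ or disjoint from it: for $y \in A$ Proposition~\ref{prop:nicenice} gives $\xi(y) = U_y$, which lies inside $A$ by the defining property of $A$; conversely, if some $y'\in \xi(y)\cap A$ existed while $y \notin A$, then $\xi(y)=\xi(y')=U_{y'}\subset A$ would force $y\in A$, a contradiction. Hence
$$
\omu(\Pi^{-1}(B)\cap A)=\int_A p(y,\,U_y\cap\Pi^{-1}(B))\,d\omu(y),
$$
and $p$ may be replaced by $q$ in the integrand using the a.e.\ equality above.

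For $y\in A$, $\Pi|_{U_y}$ is a bijection onto $U$, so $m_{\xi(y)}$ pushes forward to $m|_U$; in particular $m_{\xi(y)}(U_y) = m(U) < \infty$ (this is where the hypothesis $m(W)<\infty$ enters, via $U\subset W$) and $m_{\xi(y)}(U_y\cap\Pi^{-1}(B)) = m(B)$. By Lemma~\ref{lem:snpsi}, $\Phi(y,\cdot)$ is bounded between two constants $0 < c_1 \leq c_2$ independent of $y$, so the density of $q(y,\cdot)$ with respect to $m_{\xi(y)}$ is at least $c_1/(c_2\, m(U))$ uniformly on $A$. Integrating over $y\in A$ gives
$$
\mu(B) \geq \omu(\Pi^{-1}(B)\cap A) \geq \frac{c_1\, \omu(A)}{c_2\, m(U)}\, m(B),
$$
which yields the desired uniform lower bound on $d\mu/dm$ over $U$.

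The only points where care is needed are the partition observation above and the passage from the a.e.\ equality $q_n = p_n$ on each quotient $Y_n$ to the measure-equality $q(y,\cdot) = p(y,\cdot)$ on $\xi(y)$ for a.e.\ $y$; both are routine but should be handled cleanly since the conclusion of the lemma is a pointwise density estimate. With those in place the argument reduces to the calculation above, using the definition of $q$ and the uniform bounds on $\Phi$.
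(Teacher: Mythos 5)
Your proof is correct and follows essentially the same route as the paper's: invoke Proposition~\ref{lem:RuelleB} to identify $p(y,\cdot)$ with $q(y,\cdot)$, use the uniform two-sided bounds on $\Phi$ from Lemma~\ref{lem:snpsi} to get a uniform lower density bound on each $\xi(y) = U_y$ for $y \in A$, and push $\omu$ restricted to $A$ forward to $U$ to transfer the bound to $\mu_{|U}$. The paper's proof is terser and leaves implicit both the dichotomy you record (that $\xi$-atoms meeting $A$ lie inside $A$, i.e.\ the set $\{y : \xi(y) \subset A\}$ is exactly $A$) and the explicit constant $c_1\omu(A)/(c_2\,m(U))$; your write-up simply makes these explicit, which is a virtue rather than a departure.
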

        \begin{proof}
             We have $\omu(B)  > 0$. For $y \in A$, the Rohlin decomposition $p(y,\cdot)=q(y, \cdot)$ has 
            density on $\xi(y)$ uniformly bounded away from zero, from the definition of $q$. Let $\omu_A$ denote the restriction of $\omu$   to $A$. 
            The measure $\Pi_* \omu_A$ therefore has density bounded away from zero. For any $C \subset U$, $\mu(C) \geq \Pi_* \omu_A(C)$ and so $\mu_{|U}$ also has density bounded away from zero. 
            \end{proof}

        \begin{lem} \label{lem:Folk}
            Let $G$ be an expanding induced Markov map for $(f,m)$ with range $W$. Then there is an ergodic absolutely continuous $G$-invariant probability measure $\nu$ with density uniformly bounded away from zero and infinity $m$-almost everywhere on $W$. 
            \end{lem}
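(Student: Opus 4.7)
The plan is to construct $\nu$ as an eigenmeasure of the transfer operator for $G$ with respect to $m$. By $(\phi,t)$-conformality, the Jacobian of $G = f^{n_i}$ on $U_i$ with respect to $m$ is
$$
J_G(y) := \exp(S_{n_i}\phi(y))\,|Df^{n_i}(y)|^t, \qquad S_n\phi := \sum_{k=0}^{n-1}\phi\circ f^k.
$$
Define
$$
(\mathcal{L}\psi)(x) := \sum_{y \in G^{-1}(x)} \psi(y)/J_G(y)
$$
for $\psi \in L^1(W,m)$. Because each $U_i$ maps diffeomorphically onto $W$ and $m(W \setminus \bigcup_i U_i) = 0$, change of variables yields $\int_W \mathcal{L}\psi\, dm = \int_W \psi\, dm$, so $\mathcal{L}$ preserves total mass.

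The crux is uniform bounded distortion for all iterates of $G$: whenever $y,y'$ lie in the same $n$-cylinder $U_{i_1} \cap G^{-1}U_{i_2} \cap \cdots \cap G^{-(n-1)}U_{i_n}$, the ratio $J_{G^n}(y)/J_{G^n}(y')$ is bounded by some $K$ independent of $n$. One telescopes $\log J_{G^n}(y) - \log J_{G^n}(y')$ as a sum over all intermediate $f$-iterates of (i) differences controlled by H\"older regularity of $\phi$ and (ii) differences controlled by the H\"older conditions on $Df$ (or on $1/Df$ where $|Df|$ is large); the uniform forward expansion $|Df^j| > C_0 e^{\delta j}$ forces exponentially shrinking separation between the two orbits, so the telescoping sum is dominated by a convergent geometric series. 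This is the main technical step but is the standard bounded-distortion argument for uniformly expanding Markov maps.

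From uniform distortion we get $(\mathcal{L}^n \mathbf{1}_W)(x)/(\mathcal{L}^n \mathbf{1}_W)(x') \leq K$ for all $x,x' \in W$ and every $n$. Combined with $\int_W \mathcal{L}^n \mathbf{1}_W\, dm = m(W) < \infty$, this yields $c^{-1} \leq \mathcal{L}^n \mathbf{1}_W \leq c$ $m$-a.e.\ on $W$ for some uniform $c$. Setting $h_n := n^{-1}\sum_{k=0}^{n-1}\mathcal{L}^k \mathbf{1}_W$, we extract a weak-$*$ accumulation point $h \in L^\infty(W,m)$ enjoying the same bounds; since $\mathcal{L}h_n - h_n = (\mathcal{L}^n \mathbf{1}_W - \mathbf{1}_W)/n \to 0$, we obtain $\mathcal{L}h = h$. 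The measure $d\nu := h\, dm / \int_W h\, dm$ is then a $G$-invariant absolutely continuous probability with density bounded above and below $m$-a.e.

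Ergodicity follows from the full-branch Markov structure: if $E \subset W$ is $G$-invariant with $m(E) > 0$, pick an $m$-density point of $E$ and a shrinking sequence of cylinders $U_{i_1,\ldots,i_n}$ containing it, so that $m(E \cap U_{i_1,\ldots,i_n})/m(U_{i_1,\ldots,i_n}) \to 1$; bounded distortion transfers this density through the diffeomorphism $G^n : U_{i_1,\ldots,i_n} \to W$, and $G^n$-invariance of $E$ forces $m(W \setminus E) = 0$.
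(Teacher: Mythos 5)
Your proposal follows the classical transfer-operator / Folklore Lemma route, which is in the same spirit as the cited Lemma~35 of \cite{Me:Trans}: build the normalized transfer operator from the Jacobian of $G$ with respect to $m$, show mass preservation via conformality, derive uniform bounds on $\mathcal{L}^n \mathbf{1}_W$ from bounded distortion of iterates, take Ces\`aro limits, and conclude ergodicity via density points and the full-branch Markov structure. All of these pieces are correct in outline, and the ergodicity argument at the end is fine.

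The one step you gloss over is exactly the one the paper flags as ``a little more involved than the standard Folklore Lemma,'' namely uniform bounded distortion of $J_{G^n}$. Your telescoping argument invokes ``the H\"older conditions on $Df$ (or on $1/Df$ where $|Df|$ is large),'' but these cusp-map hypotheses only give H\"older control of $Df$ on the region $\{|Df| \le 2\}$ and of $1/Df$ on $\{|Df| \ge 1/2\}$. They do \emph{not} directly yield H\"older control of $\log|Df|$: passing from $|Df(x)-Df(x')| \le C|x-x'|^\epsilon$ to a bound on $|\log|Df(x)| - \log|Df(x')||$ requires dividing by $\min(|Df(x)|,|Df(x')|)$, which can be tiny near the cusps, and analogously the $1/Df$ bound must be multiplied by $\max|Df|$, which can be huge. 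Likewise, the stated hypothesis that ``$G$ has distortion bounded by $C_1$ on each $U_i$'' naively compounds to a bound of only $C_1^n$ for $G^n$; to get a uniform bound one needs the distortion of $G$ restricted to a subinterval $J \subset U_i$ to decay H\"older-like in $|J|$, and deriving that from the cusp-map hypotheses together with the expansion condition $|Df^j| > C_0 e^{\delta j}$ is precisely where the extra work lies (in the paper's development this control ultimately rests on the Pesin-theoretic bound in Theorem~\ref{thm:NeilUnstableManif}, not on a generic expanding-Markov-map argument). So your scheme is the right one, but calling the distortion step ``the standard bounded-distortion argument for uniformly expanding Markov maps'' understates the subtlety in the cusp setting, and that step as written has a genuine gap.
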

        \begin{proof}
        See Lemma~35 of \cite{Me:Trans} for the proof, which is a little more involved than the standard Folklore Lemma. 
            \end{proof}
                
        \begin{lem} \label{lem:Mar}
            Suppose $h_\mu = t\chi_\mu +\int \phi d\mu$. There exists an expanding induced Markov map for $(f,m)$ with integrable return time which generates $\mu$. 
            \end{lem}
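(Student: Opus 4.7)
The plan is to take $G$ to be (a refinement of) the first return map of $f$ to the regularly returning interval $U$, and verify the five bullet points of the definition together with integrable return time and that $G$ generates $\mu$.

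I would let $\{U_i\}_i$ be the connected components of the open set of points in $U$ whose first return time to $U$ takes some value $n_i$. The regularly returning property forces preimages of $U$ under iterates of $f$ to be nested or disjoint, so each $U_i$ is an open interval with $f^{n_i}|_{U_i} : U_i \to U$ a diffeomorphism. Poincar\'e recurrence for $(f, \mu)$ gives $\mu\left(U \setminus \bigcup_i U_i\right) = 0$, and Lemma~\ref{lem:den}, which yields $\mu|_U \geq c\, m|_U$ for some $c > 0$, upgrades this to $m\left(U \setminus \bigcup_i U_i\right) = 0$. Each $U_i$ is the $\Pi$-image of an element of $\xi_{-n_i}$ anchored at some $y$ with $F^{n_i}(y) \in A$, so Theorem~\ref{thm:NeilUnstableManif} applied at $F^{n_i}(y)$ with any $\eta \in (0, \chi_\mu)$ produces uniform constants $C_0, \delta > 0$ (depending only on $K$ and $\eta$) such that $|Df^j(x)| > C_0 e^{\delta j}$ for $x \in f^{n_i-j}(U_i)$ and $j \leq n_i$, together with bounded distortion of $f^{n_i}$ on $U_i$ via the telescoping H\"older estimate already used in the unstable manifold theorem. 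To arrange $|DG| > 2$, I would observe that this inequality holds automatically once $n_i$ exceeds some threshold $N_0$, and on the union of pieces with smaller return time I would iterate the first-return construction, subdividing into higher-order returns until the accumulated return time crosses $N_0$.

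Kac's lemma for $(f, \mu, U)$ gives $\sum_i n_i \mu(U_i) \leq 1$, so combined with the density bound, $\sum_i n_i m(U_i) \leq c^{-1} < \infty$; the inducing in the previous step only multiplies this by a bounded factor. To see that $G$ generates $\mu$, I would let $\tilde\mu$ be the ergodic absolutely continuous $f$-invariant probability measure produced from $G$ via Lemma~\ref{lem:normalise}; by Lemma~\ref{lem:Folk}, $\tilde\mu|_U \geq c_0\, m|_U$ for some $c_0 > 0$. If $\mu$ and $\tilde\mu$ were mutually singular, there would exist a Borel partition $I = E \sqcup E^c$ with $\mu(E^c) = \tilde\mu(E) = 0$, forcing $m(E^c \cap U) = m(E \cap U) = 0$ and hence $m(U) = 0$, a contradiction; so $\mu = \tilde\mu$. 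The hardest part will be this last identification: Lemma~\ref{lem:den} only ensures a positive absolutely continuous component of $\mu|_U$, so a priori $\mu$ could carry a singular part as well, and the argument must invoke ergodicity of both $\mu$ and $\tilde\mu$ together with the density lower bounds on $U$ for both measures to exclude this.
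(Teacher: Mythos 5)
The central difficulty with the proposal is the assertion that each component $U_i$ of the first-return set is mapped by $f^{n_i}$ diffeomorphically \emph{onto} $U$. The regularly returning property only forces pullbacks of $U$ to be nested or disjoint; it does not prevent a branch of $f^{-n}$ from being truncated by the boundary of the domain $\bigcup_j I_j$. Concretely, an endpoint $b$ of $U_i$ interior to $U$ can satisfy $f^k(b)\in\partial I_l$ for some $k<n_i$, in which case $f^{n_i}(U_i)$ is a proper subinterval of $U$ and the branch is not full. Nothing in the proposal rules out a positive-$m$-measure set of such truncated components, and without full branches the object you build fails the very definition of an expanding induced Markov map. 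Relatedly, the claim that each $U_i$ is the $\Pi$-image of an element of $\xi_{-n_i}$ with base point in $A$ is asserted but not established: the partition $\xi$ is built from first-entry times to $A$ in the natural extension, which need not coincide with first-return times of the base dynamics to $U$, so the distortion and expansion constants of Theorem~\ref{thm:NeilUnstableManif} do not transfer to the first-return pieces for free.

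This is precisely where the paper's route (following Proposition~34 of \cite{Me:Trans}) differs. There one works entirely inside the natural extension: the Markov pieces are the $\Pi$-images of the first-return cylinders of $F^{-1}$ to the set $A$ supplied by the proposition preceding Lemma~\ref{lem:Mar}. Because each $y\in A$ comes equipped with a set $U_y=V_{y'}\cap\Pi^{-1}U$ with $\Pi U_y=U$ and $\rho(y')<K$, every such cylinder automatically projects onto all of $U$ with distortion and backward expansion controlled uniformly by $K$; one never has to worry about branches being cut short by cusps. The remaining ingredients you use --- $\mu\ll m$ and the two-sided density bound on $U$ from Lemma~\ref{lem:den}, Kac's lemma for integrability, and ergodicity together with nonsingularity to identify the generated measure with $\mu$ --- are sound and are indeed the ones needed, as is the refinement trick to force $|DG|>2$. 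But until the full-branch issue is resolved the construction of $G$ itself is incomplete.
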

        \begin{proof}
        See Proposition 34 of \cite{Me:Trans} for the proof. 
            \end{proof}

\section{Acknowledgments}
The author is very grateful to Huaibin Li, Juan Rivera-Letelier, Kay Schwieger and Mike Todd for helpful conversations.     
  
\bibliography{references} 
\bibliographystyle{plain}
\end{document}